\newtheorem{theorem}{Theorem}[subsection]
\newtheorem{lemma}{Lemma}
\newtheorem{proposition}{Proposition}
\newtheorem{remark}{Remark}
\newtheorem{corollary}{Corollary}[subsection]
\newtheorem{exm}{Example}[subsection]
\newtheorem{Question}{Question}
\def\H{{\mathcal H}}
\title{Joins of Hypergraphs and Their Spectra}
\author{\rm Amitesh Sarkar, Anirban Banerjee}
\affil{Department of Mathematics and Statistics
	\\Indian Institute of Science Education and Research Kolkata\\ Mohanpur-741246,  India
	\\ {\footnotesize \textit { 16rs059@iiserkol.ac.in, anirban.banerjee@iiserkol.ac.in}}
}
\begin{document}

\maketitle

\begin{abstract}
Here, we represent a general hypergraph by a matrix and study its spectrum. We extend the definition of equitable partition and joining operation for hypergraphs, and use those to compute eigenvalues of different hypergraphs. We derive the characteristics polynomial of a complete $m$-uniform $m$-partite hypergraph $K^m_{n_1,n_2,\dots,n_m}$.  Studying edge corona of hypergraphs we find the complete spectrum of $s$-loose cycles $C^m_{L(s;n)}$ for $m \geq 2s+1$
and the characteristics polynomial of a $s$-loose paths  $P^{(m)}_{L(s;n)}$. Some of the eigenvalues of $P^{(m)}_{L(s;n)}$ are also derived. Moreover, using vertex corona, we show how to generate infinitely many pairs of non-isomorphic co-spectral hypergraphs.
\end{abstract}

\section{Introduction}
Spectral graph theory is a well-known devolved area of research in mathematics where one of the interests is to investigate the relationship between the spectrum of different matrices representing graphs with their structure. We also propose various graph operations, for example, graph joining, etc., 
and find the eigenvalues of newly generated graphs by those operations. Here we explore the same for hypergraphs.

As graphs, hypergraphs also have a vertex set and an edge set, but, unlike a graph, an edge of a hypergraph is a nonempty subset of its vertex set.
Hypergraphs can be represented by different hypermatrices or tensors \cite{Cooper2012, Qi2017}  as well by various matrices \cite{Agarwal, Rodriguez}. 
Here we consider a matrix representation of hypergraphs suggested in \cite{Banerjee2017} and find the spectrum of hypergraphs formed by joining operations. In graph theory, 
graph joining  \cite{Cardosoa} plays an important role in constructing certain graphs. Here we define a general hypergraph joining operation and extend it in different scenarios, such as a weighted or un-weighted joining of a set of weighted or un-weighted hypergraphs, the building of a hypergraph of hypergraphs, etc.
We reframe the concept of equitable partition defined on graphs \cite{Simic, Godsil1997, Kuduse}, for hypergraphs to investigate the spectrum of different joins of hypergraphs.
A hypergraph consists of only the edges with the same cardinality $m$, is called $m$-uniform hypergraph. An $m$-uniform hypergraph (on $n$ vertices) having all possible edges is called a complete $m$-uniform hypergraph and is denoted by $K^m_n$.  
Furthermore  $\overline{K}^m_n$ is called the complement of $K^m_n$ 
 which is also a complete $m$-uniform hypergraph with $n$ vertices such that $e$ is an edge of $\overline{K}^m_n$ if and only if it is not an edge of $K^m_n$. 
A graph is known as complete multi($k$)-partite if the vertex set is partitioned into more than one ($k$) parts and the edge set consists of only the all possible edges with one terminal vertex is in one partite and another terminal vertex is in a different partite. Complete multipartite graphs contain a class of integral graphs \cite{Rotman1984, Delorme2012, Hic2008}. It is still challenging to find all the eigenvalues of a complete multipartite graph, although its characteristic polynomial is known \cite{Delorme2012}.
Now, a complete $m$-uniform $k$-partite hypergraph, $K^m_{n_1,n_2,\dots,n_k}$, with the vertex set $V$ partitioned into $k$ parts say  $\{V_1,V_2,\dots,V_k\}$, is called  complete $m$-uniform  weak(strong) $k$-partite hypergraph if $m\geq k$($m\le k$) and the edge set $E=\{e\subseteq V: |e|=m, |e\cap V_i|\geq(\le) 1, \text{for all}~ i=1,2,\dots,k.\}$. Here, we are interested to compute the characteristic polynomial of a 
complete $m$-uniform $m$-partite hypergraph $K^{m}_{n_1,n_2,\dots,n_m}$.
The concept of corona of two graphs was introduced by R. Fruchut and F. Harary \cite{Harary1970}. Here we also defined the generalized corona of hypergraphs.
Vertex corona of two graphs can provide infinitely many pairs of non-isomorphic co-spectral graphs \cite{McLeman2011, Barik2007, Barik2017, Laali2016}. We derive the same for two hypergraphs. 
Edge corona, which is an interesting graph operation \cite{Hou2010, Lou}, is defined for hypergraphs and is applied to find the eigenvalues of $s$-loose paths and $s$-loose cycles. 
Generalized $s$-loose path $P^{(m)}_{L(s;n)}$ is an  $m$-uniform hypergraph with the vertex set $V=\{1,2,\dots,m+(n-1)(m-1)\}$ 
and edge set the  $E=\{\{i(m-s)+1,i(m-s)+2,\dots,i(m-s)+m\}:i=0,1,\dots,n-1\}$  \cite{Peng2016}. For $s=1$, $P^{(m)}_{L(1;n)}$ is known as loose path.
Similarly, generalized $s$-loose cycle $C^m_{L(s;n)}$ is an $m$-uniform hypergraph with the vertex set $V=\{1,2,\dots,n(m-s)\}$ and the edge set 
$\{\{i(m-s)+1,\dots,i(m-s)+m\}:i=0,1,\dots,n-1\}\cup{\{n(m-s)-s+1,n(m-s)-s+2,\dots,n(m-s),1,2,\dots,s\}}$. 
In  spectral study of hypergraphs, 
using hypermatrix, it is not easy to find the eigenvalues of $P^{(m)}_{L(s;n)},C^{(m)}_{L(s;n)}$, hyper-cubes. In \cite{Luke2019}, the authors considered oriented hypergraphs and proved some results on the spectrum of adjacency matrices of $P^{(m)}_{L(s;n)},C^{(m)}_{L(s;n)}$. 
They also posed some open questions regarding the spectrum of $P^{(m)}_{L(s;n)},C^{(m)}_{L(s;n)}$. 
Here we  find the spectrum of $C^{(m)}_{L(s;n)}$ for $m\geq{2s+1}$. We also compute some eigenvalues of an $s$-loose path and the remaining eigenvalues are shown as the zeros of a polynomial.\\

In this article, the Perron-Frobenius theorem for nonnegative irreducible matrices has been extensively used.
\begin{lemma}[Perron-Frobenius Theorem, Chapter 8, \cite{Horn} ]\label{th0} Let $A$ be a non-negative irreducible matrix. Then
  \begin{itemize}
   \item $A$ has a positive eigenvalue $\lambda$ with positive eigenvector.
   \item{$\lambda$ is simple} and for any other eigenvalue $\mu$ of $A$,~$|\mu|\leq{\lambda}.$
  \end{itemize}
\end{lemma}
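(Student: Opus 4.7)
The plan is to establish the two bullets via the classical Collatz--Wielandt approach, since this is the route used in Horn and Johnson. I would first consider the functional $r(x) = \min_{i:\, x_i > 0}\, (Ax)_i/x_i$ on the compact set $\{x \ge 0,\ \|x\|_1 = 1\}$ and define $\lambda = \sup_x r(x)$. Irreducibility of $A$ implies that $(I + A)^{n-1}$ is entrywise positive, which is the key structural input I would use repeatedly. Using this fact together with a standard compactness argument, I would show the supremum is attained at some $x^* > 0$. To see that $Ax^* = \lambda x^*$, suppose for contradiction that $(Ax^*)_i > \lambda x^*_i$ at some coordinate $i$; replacing $x^*$ by $(I + A)^{n-1} x^*$ would then yield a strictly positive vector on which $r$ takes a strictly larger value, contradicting maximality. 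This gives the first bullet: a positive eigenvalue with a positive eigenvector.

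For simplicity, I would first address geometric multiplicity. If $Ay = \lambda y$, decompose $y = y^+ - y^-$ into positive and negative parts; applying $A$ and using the positivity of $(I+A)^{n-1}$ forces $y^+$ and $y^-$ to have disjoint supports that cannot both be nonempty, so $y$ is a scalar multiple of $x^*$. To rule out a nontrivial Jordan block at $\lambda$, I would apply the same existence argument to $A^T$ (which is also nonnegative and irreducible) to produce a positive left eigenvector $u$ with $u^T A = \lambda u^T$. Then, assuming $(A - \lambda I)w = x^*$ for some $w$, the relation $u^T x^* = u^T (A - \lambda I) w = 0$ contradicts $u, x^* > 0$. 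For the spectral bound, given any eigenpair $(\mu, v)$, the triangle inequality gives $|\mu|\,|v| = |Av| \le A|v|$ entrywise, so $r(|v|) \ge |\mu|$, and by the variational definition $|\mu| \le \lambda$.

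The main obstacle is the simplicity claim, as it splits into two distinct pieces: the geometric part needs the sign decomposition combined with irreducibility, whereas the algebraic part (no Jordan chain) relies on producing a \emph{left} Perron vector and pairing it against $x^*$. Each uses irreducibility in a slightly different way, and conflating them is the usual source of incomplete proofs. Since the statement is quoted verbatim from Chapter~8 of Horn and Johnson, the authors of the present paper do not reprove it; the outline above is offered only for orientation, and all subsequent applications of the lemma may take both bullets as given.
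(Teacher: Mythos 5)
The paper offers no proof of this lemma at all: it is quoted as a known result from Chapter~8 of Horn and Johnson and used as a black box, so there is no internal argument to compare yours against. Your outline is the standard Collatz--Wielandt route from that reference, and most of it is sound: the variational definition of $\lambda$, the use of $(I+A)^{n-1}>0$ to push the maximizer into the positive orthant and to upgrade $Ax^*\ge\lambda x^*$ to equality, the left-eigenvector argument ruling out a Jordan chain, and the bound $|\mu|\le\lambda$ via $|\mu|\,|v|\le A|v|$ are all correct in substance.

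One step is not right as written. In your geometric-multiplicity argument you decompose $Ay=\lambda y$ as $y=y^+-y^-$ and claim positivity of $(I+A)^{n-1}$ ``forces $y^+$ and $y^-$ to have disjoint supports that cannot both be nonempty.'' The supports of $y^+$ and $y^-$ are disjoint by construction, so that conclusion carries no information, and nothing in the sentence actually produces a contradiction. The standard repair is different: first show that any nonnegative eigenvector for $\lambda$ must be strictly positive (apply $(I+A)^{n-1}$), then, given a second real eigenvector $y$ independent of $x^*$, choose $t=\min_i x^*_i/y_i$ (over indices with $y_i>0$, after replacing $y$ by $-y$ if necessary) so that $x^*-ty\ge 0$ has a zero coordinate; since $x^*-ty$ is again an eigenvector for $\lambda$, it must be either strictly positive or identically zero, and the zero coordinate forces $x^*=ty$. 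You also need a sentence reducing complex eigenvectors for $\lambda$ to real ones (real and imaginary parts are each eigenvectors). With that step repaired the sketch matches the cited source; for the purposes of this paper, though, the authors intend the lemma to be taken as given.
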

This positive eigenvalue known as Perron eigenvalue and others as non-Perron eigenvalue.

\section{Adjacency Matrix for Hypergraphs}
Let $\H(V,E,W)$ be a hypergraph with the vertex set $V=\{1,2,\dots ,n\},$ the edge set $E$ and with an weight function
$W:E\rightarrow{\mathbb{R}_{\geq{0}}}$  defined by $W(e)=w_e$ for all $e\in{E}.$ The adjacency matrix $A_{\H}$ of $\H$ is defined as 
$$(A_{\H})_{ij}:=\sum\limits_{e\in{E};i,j\in{e}}\dfrac{w_e}{|e|-1}.$$
This definition is adopted from the definition of the adjacency matrix of an unweighted non-uniform hypergraph defined in \cite{Banerjee2017}.
The valency of a vertex $i$ of $H$ is given by $d_i:=\sum\limits_{e\in{E},i\in{e}}w_e$ $ = \sum\limits_{j(\neq{i})\in{V}}\sum\limits_{e\in{E};i,j\in{e}}\dfrac{w_e}{|e|-1}$.
Thus for an $m$-uniform hypergraph $\H$ we have  $(A_{\H})_{ij}=\dfrac{1}{m-1}\sum\limits_{e\in{E},i,j\in{e}}w_e$. If we take $w_e=1$, 
then $(A_{\H})_{ij}=\dfrac{d_{ij}}{m-1}$ where $d_{ij}$ is the codegree of the vertices $i,j$, i.e., the number of edges containing both the vertices $i$ and $j$\footnote{In 2002 Rodriguez defined the adjacency matrix and corresponding Laplacian matrix for uniform hypergraphs and which is the matrix $(m-1)A_{\H}$ \cite{Rodriguez}.}. If the vertices $i$ and $j$ belong to an edge we call, they are adjacent and denote as $i\sim j$, otherwise, $i\nsim j$.
For unweighted hypergraph we take $W(e)=1$ for all $e\in{E}$ and then the hypergraph $\H$ is denoted by $\H(V,E)$. For an unweighted hypergraph $\H$ the valency $d_i$ of a vertex $i$ is the degree of $i$, i.e., the number of edges containing the vertex $i$. 
A hypergraph is ($r$-)regular if all the degrees of vertices are equal( to $r$). 
A hypergraph  is connected if for any two vertices $v_0, v_l$ there is an alternating sequence $v_0e_1v_1e_2v_2\dots v_{l-1}e_lv_l$ of distinct vertices $v_0,v_1,v_2,\dots, v_l$ and distinct edges $e_1,e_2,\dots, e_l$, such that, $v_{i-1},v_i \in e_i$ for $i=1,\dots,l$.
For an $r$-regular connected hyperghraph $\H$, with $n$ vertices $(r, 1^t)$ is always an eigenpair of $A_{\H}$ and by Lemma \ref{th0}, $r$ is a simple eigenvalue of $A_{\H}$. Moreover, since $A_{\H}$ is real symmetric matrix, thus we can have an orthogonal set of eigenvectors for $A_{\H}$.

\subsection{Equitable Partition for Hypergraphs}

Let $\mathcal{H}(V,E,W)$ be an $m$-uniform hypergraph. We say a partition $\pi=\{C_1,C_2,\dots,C_k\}$ of $V$ is an equitable partition of $V$
if for any $p,q\in{1,2, \dots ,k}$ and for any $i\in{C_p}$,~$$\sum\limits_{j,j\in{C_q}}(A_\mathcal{H})_{ij}=b_{pq},$$ where $b_{pq}$ is a constant depends
only on $p$ and $q$. Note that any two vertices belonging to the same cell of the partition have the same valency. For an equitable partition with 
$k$-number of cells we define the quotient matrix $B$ as $(B)_{pq}=b_{pq}$, for $1\leq{p,q}\leq{k}$.

\begin{proposition}
Let $\H(V,E,W)$ be an $m$-uniform hypergraph. Then orbits of any subgroup of $Aut(H)$ form an equitable partition.
\end{proposition}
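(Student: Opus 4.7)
The plan is to exploit the fact that any automorphism $\sigma\in\mathrm{Aut}(\H)$ preserves the adjacency matrix entrywise, in the sense that $(A_\H)_{\sigma(i)\sigma(j)} = (A_\H)_{ij}$ for all $i,j\in V$. Given this invariance, equitability of the orbit partition falls out of a one-line change-of-summation argument.

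First I would unpack what an automorphism of a weighted $m$-uniform hypergraph means: a bijection $\sigma\colon V\to V$ such that $e\in E$ iff $\sigma(e)\in E$, together with $W(\sigma(e))=W(e)$. From the formula
$$(A_\H)_{ij}=\frac{1}{m-1}\sum_{e\in E,\ i,j\in e} w_e,$$
the map $e\mapsto\sigma(e)$ restricts to a weight-preserving bijection between edges through $\{i,j\}$ and edges through $\{\sigma(i),\sigma(j)\}$, which immediately gives the invariance $(A_\H)_{\sigma(i)\sigma(j)}=(A_\H)_{ij}$. Equivalently, the permutation matrix $P_\sigma$ commutes with $A_\H$.

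Next, let $G\le\mathrm{Aut}(\H)$ and let $\pi=\{C_1,\dots,C_k\}$ be the partition of $V$ into $G$-orbits. Fix indices $p,q$ and two vertices $i,i'\in C_p$. By definition of an orbit, there exists $\sigma\in G$ with $\sigma(i)=i'$. Then
$$\sum_{j\in C_q}(A_\H)_{i'j}=\sum_{j\in C_q}(A_\H)_{\sigma(i)j}=\sum_{j\in C_q}(A_\H)_{i,\sigma^{-1}(j)},$$
where the second equality uses the entrywise invariance above applied with $\sigma^{-1}$. Since $\sigma^{-1}\in G$ and $C_q$ is a $G$-orbit, $\sigma^{-1}$ permutes $C_q$, so reindexing $j'=\sigma^{-1}(j)$ gives $\sum_{j'\in C_q}(A_\H)_{ij'}$. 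Hence the quantity $b_{pq}:=\sum_{j\in C_q}(A_\H)_{ij}$ is independent of the representative $i\in C_p$, which is exactly the definition of an equitable partition.

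There is no serious obstacle here; the entire argument is two applications of the bijection supplied by $\sigma$. The only point worth flagging is notational, namely fixing the correct definition of an automorphism of a weighted hypergraph so that the adjacency entries are permutation-invariant. Once that is in hand, the verification is a single reindexing step and nothing about the $m$-uniform hypothesis is essential beyond writing $A_\H$ with the fixed normalisation $1/(m-1)$.
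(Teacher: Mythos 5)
Your proposal is correct and follows essentially the same route as the paper: both arguments rest on the entrywise invariance $(A_\H)_{\sigma(i)\sigma(j)}=(A_\H)_{ij}$ and a reindexing of the sum over the orbit $C_q$, which $\sigma$ permutes. Your write-up is in fact slightly cleaner, since you state the invariance explicitly and avoid the paper's unnecessary splitting of the sum into adjacent and non-adjacent terms.
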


\begin{proof}
Let $\Gamma$ be a subgroup of $Aut{\left(\mathcal{H}\right)}$ 
acts on the vertex set $V=\{1,2,\dots,n\}$. We consider the orbits as $\pi=\{V_1,V_2,\dots,V_k\}$.
For any $i\in{V_p}$ there exists $\varphi\in{\Gamma}$ such that $\varphi\left(i\right)=j\in V_p$. Now we have
\begin{align*}
\sum_{s, s\in{V_q}}(A_\mathcal{H})_{is}
&= \sum_{s,s\in{V_q},s\nsim{i}}(A_\mathcal{H})_{is} +
\sum_{s, s\in{V_q},s\sim{i}}(A_\mathcal{H})_{is}\\
&=\sum_{\substack{\varphi{\left(s\right)},\varphi{\left(s\right)}\in{V_q},\\\varphi{\left(s\right)}
\nsim\varphi{\left(i\right)}}}
(A_\mathcal{H})_{\varphi\left(i\right)\varphi{\left(s\right)}} +
\sum_{\substack{\varphi{\left(s\right)}, \varphi{\left(s\right)}\in{V_q},\\\varphi{\left(s\right)}
	\sim\varphi{\left(i\right)}}}(A_\mathcal{H})_{\varphi\left(i\right)\varphi{\left(s\right)}}\\
&=\sum_{\substack{\varphi{\left(s\right)},\varphi{\left(s\right)}\in{V_q},\\\varphi{\left(s\right)}\nsim{j}}}(A_\mathcal{H})_{j\varphi(s)}+
\sum_{\substack{\varphi{\left(s\right)},\varphi{\left(s\right)}\in{V_q},\\\varphi{\left(s\right)}\sim{j}}}(A_\mathcal{H})_{j\varphi(s)}\\
&=\sum_{t,t\in{V_q},t\nsim{j}}(A_\mathcal{H})_{jt}+
\sum_{t,t\in{V_q},t\sim{j}}(A_\mathcal{H})_{jt}\\
&=\sum_{t,t\in{V_q}}(A_\mathcal{H})_{jt} =constant.
\end{align*}
This completes the proof.
\end{proof}

Let $\mathcal{H}(V,E)$ be an $m$-uniform  weighted hypergraph with $n$~vertices. For an equitable partition $\pi$ with $k$ cells $\{C_1,C_2, \dots ,C_k\}$ 
we define the characteristic matrix $P$ of order $n\times{k}$ as follows
$$
\left(P\right)_{ij}=
\begin{cases}
1  &\text{if vertex i$\in{C_j}$},\\
0  &\text{otherwise.}
\end{cases}
$$
Thus $A_{\mathcal{H}}P=PB$. The column space of $P$ is $A_\mathcal{H}$-invariant. It is easy to show that $B$ 
is also diagonalizable as $A_{\H}$ and for each $\lambda\in{spec(B)}$ with the multiplicity $r$,~$\lambda\in{spec(A_{\H})}$ with the multiplicity atlest $r$.

\section{Weighted Joining of Hypergraphs}
\subsection{Weighted Joining of a Set of Uniform Hypergraphs}
First we consider the joining of two hypergraphs. Let $\H_1(V_1,E_1,W_1)$ and $\H_2(V_2,E_2,W_2)$ be two $m$-uniform hypergraphs. The join 
$\H(V,E,W):=\H_1\oplus^{w_{12}} \H_2$ of $\H_1$ and $\H_2$ is the hypergraph with the vertex set $V=V_1\cup V_2$, edge set 
$E=\cup_{i=0}^2E_i$, where $E_{0}=\{e\subseteq{V} :|e|=m, e\cap{V_i}\not =\phi,\forall i=1,2\}$ and the weight function $W:E\rightarrow \mathbb{R}_{\geq 0}$
is defined by 
$$
W(e)=
\begin{cases}
W_1(e) &\text{if $e\in{E_1}$},\\
W_2(e) &\text{if $e\in{E_2}$},\\
w_{12} &\text{otherwise},
\end{cases}
$$ 
where $w_{12}$ is a real non-negative constant.\\
Now, we define the same operation for a set of $m$-unoform hypergraphs. Let $S=\{\mathcal{H}_i(V_i,E_i,W_i):1\leq{i}\leq{k}\}$,~
$|V_i|=n_i$, be a set of $m$-uniform hypergraphs, $k\leq{m}$. Using the set $S$, of hypergraphs we construct a new $m$-uniform hypergraph $\H(V,E,W)$
where $V=\cup_{i=1}^mV_i$,~$E=\cup_{i=0}^mE_i$,~ $E_{0}=\{e\subseteq{V} : e\cap{V_i}\not =\phi,\forall i={1,2, \dots ,k},|e|=m\}$ and 
$W:E\rightarrow \mathbb{R}_{\geq 0}$ defined by 
$$
W(e)=
\begin{cases}
W_i(e) &\text{if $e\in{E_i}$ for $i=1,2\dots,k$},\\
w_s &\text{otherwise,}
\end{cases}
$$ where $w_s$ is a real non-negative constant. The resultant hypergraph $\H$ is called the join of a set $S$ of $m$-uniform hypergraphs $\H_i$'s and it is denoted as $\mathcal{H}:=\oplus_{\mathcal{H}_i\in{S}}^{w_s}\mathcal{H}_i$.\\
Now we write the adjacency matrix of the join, $\H$. Let $p,q\in{\{1,2,\dots,k\}}$ and $l_1,l_2\in{V_p}$. 
Let
\begin{align*}
d^{S(m)}_{pp}:&=\dfrac{1}{m-1}\text{(number of new edges containing two fixed vertices $l_1,l_2$ from $V_p$)}\\
&=\dfrac{1}{m-1}\sum\limits_{\substack{i_p\geq 0, i_j\geq{1},(j\neq p)\\{i_1+i_2+\dots+i_k=m-2}}}\binom{n_1}{i_1}\binom{n_2}{i_2}\dots\binom{n_{p-1}}{i_{p-1}}\binom{n_p-2}{i_p}\binom{n_{p+1}}{i_{p+1}}\dots\binom{n_k}{i_k}.
\end{align*}
For $p\neq{q}$,~$i\in{Y}={\{1,2,\dots,k\}\backslash{\{p,q\}}}$, we take two fixed vertices $l_1,l_2$ such that $l_1\in{V_p}$,~$l_2\in{V_q}$ and define

\begin{align*}
d^{S(m)}_{pq}&:=\dfrac{1}{m-1}\text{(number of new edges containing two fixed vertices one 
from $V_p$ and another from $V_q$)}\\
&=\dfrac{1}{m-1}\sum\limits_{\substack{i_p,i_q\geq 0, i_j\geq{1},(j\neq p,q)\\{i_1+i_2+\dots+i_k=m-2}}}\binom{n_1}{i_1}\binom{n_2}{i_2}\dots\binom{n_{p-1}}{i_{p-1}}
\binom{n_p-1}{i_p}\binom{n_{p+1}}{i_{p+1}}\dots\\
&\ \ \ \ \ \ \ 
\binom{n_{q-1}}{i_{q-1}}\binom{n_q-1}{i_q}\binom{n_{q+1}}{i_{q+1}}\dots\binom{n_k}{i_k}.
\end{align*}
Therefore the adjacency matirx $A_{\H}$ of $\H$ can be expressed as 
$$
(A_{\mathcal{H}})_{ij}=
\begin{cases}
\left(A_{\mathcal{H}_p}\right)_{ij}+w_sd^{S(m)}_{pp} &\text{if $i, j\in{V_p}$,~$i\neq{j}$},\\
0 &\text{$i=j$},\\
w_sd^{S(m)}_{pq} &\text{if $i\in{V_p}, j\in{V_q}, p\neq{q}.$}
\end{cases}
$$

\subsubsection{Weighted Joining of Uniform Hypergraphs on a Backbone Hypergraph}

Let $\H(V,E,W)$ be an $m$-uniform hyperghraph. We call the hypergraph $\mathcal{H}_{b}(V_b,E_b,W_b)$,~$V_b=\{1,2,\dots,n\}$ as a backbone of $\H$
if $\H$ can be constructed by a set $S=\{\mathcal{H}_i(V_i,E_i,W_i)$: $i=1,2,...,n\}$, of $m$-uniform hypergraphs,~$(m\geq{max\{|e|:e\in{E_b}\}})$ 
with the following operations :
\begin{enumerate}
\item Replace vertex $i$ of $\H_b$ by $\mathcal{H}_i$, for $i=1,2,\dots,n$. 
\item For each edge $e=\{j_1,j_2,\dots,j_k\}\in{E_b}$, take $S_e=\{\mathcal{H}_{j_i}:i=1,2,\dots,k\}$ and apply the operation 
$\oplus^{W_b(e)}_{S_e}$ defined above.
\end{enumerate}
We call the hypergraphs $\mathcal{H}_i$'s as participants on the backbone $\H_b$ to form the hypergraph $\mathcal{H}$. 
Thus the adjacency matrix for $\mathcal{H}$ can be written as
$$
(A_{\mathcal{H}})_{ij}=
\begin{cases}
(A_{\mathcal{H}_p})_{ij}+\sum\limits_{e\in{E_b},p\in{e}}W_b(e)d^{S_e(m)}_{pp} &\text{if $i,j\in{V_p}$,~$i\neq{j}$,}\\
0 &\text{if $i=j$,}\\
\sum\limits_{e\in{E_b}, p, q\in{e}}W_b(e)d^{S_e(m)}_{pq} &\text{if $i\in{V_p}, j\in{V_q}, p\neq{q}$.}
\end{cases}
$$

\begin{theorem}\label{th1}
Let $\mathcal{H}_b(V_b,E_b,W_b)$ be a hypergraph with the vertex set $V_b=\{1,2,\dots,n\}$ and let $\{\mathcal{H}_i(V_i,E_i,W_i):i=1,2,\dots,n\}$ be a collection of regular $m$-uniform
hypergraphs $(m\geq{\{|e|:e\in{E_b}\}})$. Let $\mathcal{H}(V,E,W)$ be the $m$-uniform hypergraph constructed by taking $\H_b$ as backbone hypergraph and 
$\H_i$'s as participants. Then for any non-Perron eigenvalue $\lambda$ of $A_{\mathcal{H}_p}$ with multiplicity $l$,~
$\lambda-\sum\limits_{e\in{E_b},p\in{e}}W_b(e)d^{S_e{(m)}}_{pp}$ is an eigenvalue of $A_{\mathcal{H}}$ with the multiplicity atleast $l$.
\end{theorem}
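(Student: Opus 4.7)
The plan is to produce, for each non-Perron eigenvalue $\lambda$ of $A_{\mathcal{H}_p}$ with an eigenvector $v$, a corresponding eigenvector $\tilde v$ of $A_{\mathcal{H}}$ (with eigenvalue $\lambda - c_p$, where $c_p:=\sum_{e\in E_b,\,p\in e}W_b(e)d^{S_e(m)}_{pp}$) by lifting $v$ by zeros outside $V_p$. The linear independence of these lifts then gives the claimed lower bound on the multiplicity.

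First I would rewrite the block structure of $A_{\mathcal{H}}$ suggested by the formula right above the theorem. Writing $J_p$ for the all-ones matrix and $I_p$ for the identity on $V_p$, the diagonal block of $A_{\mathcal{H}}$ indexed by $V_p\times V_p$ is exactly $A_{\mathcal{H}_p}+c_p(J_p-I_p)$, since $A_{\mathcal{H}_p}$ has zero diagonal and the off-diagonal entries pick up the constant $c_p$. Similarly, for $p\neq q$ the off-diagonal block on $V_p\times V_q$ is the constant matrix whose entries are all $\sum_{e\in E_b,\,p,q\in e}W_b(e)d^{S_e(m)}_{pq}$, i.e., a scalar multiple of the all-ones matrix $J_{pq}$.

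Next I would exploit regularity of $\mathcal{H}_p$. By Lemma \ref{th0}, $\mathbf{1}_p$ spans the Perron eigenspace of $A_{\mathcal{H}_p}$, and since $A_{\mathcal{H}_p}$ is real symmetric, every non-Perron eigenvector $v$ can be chosen orthogonal to $\mathbf{1}_p$. Then $J_p v=0$ and $(J_p-I_p)v=-v$, so
\begin{equation*}
\bigl(A_{\mathcal{H}_p}+c_p(J_p-I_p)\bigr)v=(\lambda-c_p)v.
\end{equation*}
Let $\tilde v$ denote $v$ extended by zero on $V\setminus V_p$. For $i\in V_p$ the computation above gives $(A_{\mathcal{H}}\tilde v)_i=(\lambda-c_p)v_i=(\lambda-c_p)\tilde v_i$. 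For $i\in V_q$ with $q\neq p$, the row is constant on $V_p$, so $(A_{\mathcal{H}}\tilde v)_i$ is that constant times $\sum_{j\in V_p}v_j=0$, matching $(\lambda-c_p)\tilde v_i=0$. Hence $\tilde v$ is an eigenvector of $A_{\mathcal{H}}$ with eigenvalue $\lambda-c_p$.

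Finally, if $v_1,\ldots,v_l$ are linearly independent non-Perron eigenvectors of $A_{\mathcal{H}_p}$ for $\lambda$ (chosen orthogonal to $\mathbf{1}_p$), their zero-extensions $\tilde v_1,\ldots,\tilde v_l$ remain linearly independent, proving the multiplicity bound. The only real subtlety, and the step I would write out most carefully, is the orthogonality argument that lets the off-diagonal blocks annihilate $\tilde v$; this is exactly where the hypothesis that $\lambda$ is \emph{non-Perron} (together with regularity of $\mathcal{H}_p$, which forces the Perron eigenvector to be $\mathbf{1}_p$) is essential, and it is what fails without either hypothesis.
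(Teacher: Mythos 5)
Your proposal is correct and follows essentially the same route as the paper: the paper also takes a non-Perron eigenvector $f$ of $A_{\mathcal{H}_p}$ orthogonal to the all-ones vector, extends it by zero to $f^*$ on $V$, and uses $\sum_{i\in V_p}f(i)=0$ to kill both the $J_p$ term in the diagonal block and the constant off-diagonal blocks. Your write-up just makes explicit the block structure, the role of regularity in forcing the Perron eigenvector to be $\mathbf{1}_p$, and the linear-independence step, all of which the paper leaves implicit.
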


\begin{proof}
Let $(\lambda,f)$ be an eigenpair of $A_{\H_p}$, such that $f$ is orthiogonal to the constant vector $\left[1,1,1,\dots,1\right]^t$. 
We define $f^*:V\rightarrow{\mathbb{R}}$ by 
$$
f^{*}(v)=
\begin{cases}
 f(v) &\text{if $v\in{V_p}$},\\
 0 &\text{otherwise.}
\end{cases}
$$
Thus $f^*$ is an eigenvector of $A_{\mathcal{H}}$ corresponding to the eigenvalue $\lambda-\sum\limits_{e\in{E_B},p\in{e}}w_ed^{S_e(m)}_{pp}$. 
Since $\sum_{i\in{V_p}}f(i)=0,$ thus the proof folows.
\end{proof}

When $\mathcal{H}_i(V_i,E_i,W_i)$'s are regular, the partition $\pi=\{V_1,V_2,\dots,V_n\}$ forms an equitable partition for
$\mathcal{H}$. In particular if $\mathcal{H}_i$'s are $r_i$ regular then the quotient matrix $B$ is as follows

\begin{align}\label{Qmatrix}
(B)_{pq}=
\begin{cases}
r_p+(n_p-1)\sum\limits_{e\in{E_b},p\in{e}}W_b(e)d^{S_e{(m)}}_{pp} &\text{if $p=q$},\\
n_q\sum\limits_{e\in{E_b},p,q\in{e}}w_b(e)d^{S_e(m)}_{pq} &\text{otherwise.}
\end{cases}
\end{align}
The remaining $n$ eigenvalues of $A_{\mathcal{H}}$ can be obtained from $B$.
\begin{exm}
Take $\H_{b}=K^m_{k}$,~$k\geq{m}$ and $\H_i=\overline{K}^m_{n_i}(V_i,E_i)$,~$i=1,2,\dots,k$. 
Consider $\H_b$ as the backbone and $\H_i$'s as the participants. Then we get the complete $m$-uniform $k$-partite hypergraph $K^m_{n_1,n_2,\dots,n_k}$. By the above theorem we have $0$ is an eigenvalue 
of $A_{K^m_{n_1,n_2,\dots,n_k}}$ with the multiplicity atleast $\sum\limits_{i=1}^kn_i-k$. The quotient matrix $(B)_{k\times k}$ is given by
$$
(B)_{p,q}=
\begin{cases}
0 &\text{if $p=q$,}\\
n_qs_{pq}&\text{otherwise,}
\end{cases}
$$
where $s_{pq}=\sum\limits_{\substack{\{i_1,i_2,\dots,i_l\}\subseteq{\{1,2,\dots,k\}\backslash{\{p,q\}}},\\|\{i_1,i_2,\dots,i_l\}|=m-2}}n_{i_1}n_{i_2}\dots n_{i_l}$
for $p,q=1,2,\dots,k$ and $p\neq q$.
\end{exm}

\begin{corollary}\label{cor1}
Let $S=\{\mathcal{H}_i(V_i,E_i,W_i):1\leq{i}\leq{k}\leq{m}\}$ be a set of $m$-uniform hypergraphs where $\mathcal{H}_i$ 
are $r_i$-regular. Let $\mathcal{H}=\oplus_{\mathcal{H}_i\in{S}}^{w_s}\mathcal{H}_i$. Then for any non-perron eigenvalue $\lambda$ of
$A_{\mathcal{H}_i}$ with the multiplicity $l$,~$\lambda-w_sd^{S(m)}_{ii}$ is an eigenvalue of $A_{\mathcal{H}}$
with multiplicity atleast $l$.
\end{corollary}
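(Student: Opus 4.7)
The plan is to realise the weighted join $\mathcal{H}=\oplus_{\mathcal{H}_i\in S}^{w_s}\mathcal{H}_i$ as a particular instance of the backbone construction introduced just before Theorem~\ref{th1}, so that the corollary follows by direct specialisation.

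Concretely, I would take the backbone $\mathcal{H}_b(V_b,E_b,W_b)$ to be the single-edge $k$-uniform hypergraph on $V_b=\{1,2,\dots,k\}$ with the unique edge $e_0=\{1,2,\dots,k\}$ of weight $W_b(e_0)=w_s$. Because $k\leq m$, the hypothesis $m\geq \max\{|e|:e\in E_b\}$ of Theorem~\ref{th1} is satisfied. When we replace each vertex $i$ of $\mathcal{H}_b$ by the participant $\mathcal{H}_i$ and apply the operation $\oplus_{S_{e_0}}^{W_b(e_0)}$ associated to the unique edge $e_0$, we have $S_{e_0}=S$ and $W_b(e_0)=w_s$, so the resulting backbone hypergraph is exactly $\oplus_{\mathcal{H}_i\in S}^{w_s}\mathcal{H}_i=\mathcal{H}$. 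This reduction is a direct unpacking of the two definitions and requires no real computation.

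With this identification in place, I would apply Theorem~\ref{th1}: for any non-Perron eigenvalue $\lambda$ of $A_{\mathcal{H}_i}$ of multiplicity $l$, the theorem delivers
\[
\lambda-\sum_{e\in E_b,\,i\in e}W_b(e)\,d^{S_e(m)}_{ii}
\]
as an eigenvalue of $A_{\mathcal{H}}$ of multiplicity at least $l$. Since $E_b=\{e_0\}$ and $i\in e_0$ for every $i\in\{1,\dots,k\}$, the sum collapses to the single term $w_s\,d^{S(m)}_{ii}$, which is exactly the conclusion of Corollary~\ref{cor1}.

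There is no genuine obstacle: the only point to verify carefully is that the single-edge backbone really reproduces the join $\mathcal{H}$, after which the statement is an immediate specialisation of Theorem~\ref{th1}. The $r_i$-regularity of the participants, which is assumed in the corollary, is precisely the hypothesis of Theorem~\ref{th1}, so the invocation is clean.
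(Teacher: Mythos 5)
Your reduction of the join $\oplus_{\mathcal{H}_i\in S}^{w_s}\mathcal{H}_i$ to the backbone construction with a single $k$-element edge of weight $w_s$, followed by specialising Theorem~\ref{th1}, is exactly how the paper intends Corollary~\ref{cor1} to be obtained (it gives no separate proof, stating it directly as a consequence of Theorem~\ref{th1}). Your argument is correct and matches the paper's approach.
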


Note that the remaining $k$ eigenvalues can be obtained from the quotient martrix $B$ defined as  
$$
(B)_{pq}=
\begin{cases}
r_p+(n_p-1)w_sd^{S(m)}_{pp} &\text{if $p=q$},\\
n_qw_sd^{S(m)}_{pq} &\text{otherwise.}
\end{cases}
$$

\begin{corollary}\label{cor2}
Let $G_b(V_0,E_0)$ be an undirected weighted graph with the adjacency matrix $(w_{ij})_{k\times{k}}$ and $\mathcal{H}_i(V_i,E_i)$'s be  
$r_i$-regular $m$-uniform hypergraphs, for $1\leq{i}\leq{k=|V_0|}$. Let $\mathcal{H}(V,E)$ be the $m$-uniform hypergraph constructed by taking $G_b$ as the backbone
graph and $\H_i$'s as the participants. Then for any non-perron eigenvalue $\lambda$ of $A_{\H_p}$ with multiplicity $l$,~ 
$\lambda-\sum\limits_{e\in{E_0},p\in{e}}w_{pq}d^{S_e(m)}_{pp}$ is an eigenvalue of $A_\mathcal{H}$ with the multiplicity atleast $l$.
\end{corollary}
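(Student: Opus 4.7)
The plan is to derive this corollary as an immediate specialization of Theorem~\ref{th1}, with the weighted graph $G_b$ playing the role of the backbone hypergraph. The strategy is to verify that a weighted graph already fits the setup of Section~3 and then to substitute directly into the conclusion of Theorem~\ref{th1}; there is no fresh spectral analysis to carry out.

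First I would observe that an undirected weighted graph $G_b(V_0, E_0)$ with adjacency matrix $(w_{ij})_{k\times k}$ is literally a $2$-uniform weighted hypergraph, whose edge $\{p,q\}$ carries the weight $W_b(\{p,q\}) := w_{pq}$. Since we build an $m$-uniform hypergraph $\H$ with $m \geq 2$, the condition $m \geq \max\{|e| : e \in E_0\}$ required by the backbone construction becomes $m \geq 2$, which is automatic. Hence the triple consisting of $G_b$, the participants $\{\H_i\}_{i=1}^{k}$, and the edge weights $\{w_{ij}\}$ fits verbatim into the framework preceding Theorem~\ref{th1}, and the hypergraph $\H$ defined in the corollary coincides with the one obtained by replacing each vertex $i \in V_0$ by $\H_i$ and applying the operation $\oplus^{w_{pq}}_{S_e}$ along every edge $e = \{p,q\} \in E_0$.

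Given this identification, I would then invoke Theorem~\ref{th1} directly: for each non-Perron eigenvalue $\lambda$ of $A_{\H_p}$ of multiplicity $l$, the number $\lambda - \sum_{e \in E_0,\, p \in e} W_b(e)\, d^{S_e(m)}_{pp}$ is an eigenvalue of $A_{\H}$ with multiplicity at least $l$. Replacing $W_b(e)$ by $w_{pq}$, where $q$ denotes the other endpoint of the current edge $e = \{p,q\}$ through $p$, produces exactly the expression appearing in the corollary. The only point that requires any care is this slight notational abuse inside the sum (the symbol $q$ is implicitly bound to the edge $e$); once that convention is fixed there is no residual obstacle, and no calculation beyond unwinding the definitions is needed.
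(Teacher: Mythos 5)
Your proposal is correct and matches the paper's (implicit) argument: the paper offers no separate proof of Corollary~\ref{cor2}, treating it exactly as you do, namely as the specialization of Theorem~\ref{th1} to the case where the backbone hypergraph is the $2$-uniform weighted hypergraph $G_b$ with $W_b(\{p,q\})=w_{pq}$. Your remark about the notational binding of $q$ to the edge $e$ in the sum is a fair observation, but it does not affect the validity of the deduction.
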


If we consider $G_b=K_2$ as the backbone and $\mathcal{H}_1,\mathcal{H}_2$ as the participants to construct the hypergraph $\H$ then $\H$ is written as $\H_1\oplus\H_2$.

\begin{exm}
Take $\mathcal{H}_i=\overline{K}^m_{n_i}$,~$1\leq{i}\leq{k}\leq{m}$,~$S=\{\mathcal{H}_i:i=1,2,\dots,k\}$ and $w_s=1$.
Then $\oplus_{\mathcal{H}_i\in{S}}^{1}{\mathcal{H}_i}=K^m_{n_1,n_2,\dots,n_k}$, which is the weak $m$-uniform $k$-partite complete hypergraph. 
Using the above corollary we get that for any $i=1,2,\dots,k$;~$-d^{S(m)}_{pp}$ is an
eigenvalue of $A_{K^m_{n_1,n_2\dots,n_k}}$ with the multiplicity atleast $(n_i-1)$ for $i=1,2,\dots,k$. The remaining eigenvalues of $A_{K^m_{n_1,n_2,\dots,n_k}}$ 
are the eigenvalues of the quotient matrix $B$, defined as 
$$
(B)_{pq}=
\begin{cases}
(n_p-1)d^{S(m)}_{pp} &\text{if $p=q$},\\
n_qd^{S(m)}_{pq} &\text{otherwise.}
\end{cases}
$$
\end{exm}

In the above two examples, if we take $k=m$, we have $0$ as an eigenvalue of $A_{K^{m}_{n_1,n_2,\dots,n_m}}$ with the multiplicity 
atleast $\sum\limits_{i=1}^mn_i-m$. Here, the quotient matrix formed by the equitable partition $\pi=\{V_1,V_2,\dots,V_m\}$ and which is given by
$$
B=\dfrac{1}{m-1}
\begin{bmatrix}
0&s_1&\cdots&s_1&s_1\\
s_2&0&\cdots&s_2&s_2\\
\vdots&\vdots&\ddots&\vdots&\vdots\\
s_m&s_m&\cdots&s_m&0
\end{bmatrix},
$$
where $s_i=\prod\limits_{j=1,j\neq{i}}^mn_j$.\\
Note that
$\alpha(\neq{0})\in{spec(K^m_{n_1,n_2,\dots,n_m})}$ if and only if $r^{m-1}\alpha\in{spec(K^m_{rn_1,rn_2,\dots,rn_m})}$ for $r\in{\mathbb{N}}$.

Using $(1)$ from \cite{Bapat2018}, we have the characteristic polynomial of $Q'_B:=(m-1)B^T$ is 
$\phi_{B}(x)=x^m-\sum\limits_{i=2}^m{(i-1)}\sigma_i(s_1,s_2,\dots,s_m)x^{m-i},$ 
where $\sigma_i(s_1,s_2,\dots,s_m)=\sum\limits_{1\leq{j_1}<j_2<\dots<j_i\leq{m}}{s_{j_1}s_{j_2}\dots{s_{j_i}}}.$\\
We may consider $K^m_{n_1,n_2,\dots,n_m}$ as the generalization of complete bipartite graphs. Now we have the following proposition 

\begin{proposition}
Characteristic polynomial of $K^m_{n_1,n_2,\dots,n_m}$ is $x^{n-m}\Bigg(x^m-\sum\limits_{i=2}^m\dfrac{i-1}{(m-1)^i}\sigma_i(s_1,s_2,\dots,s_m)x^{m-i}\Bigg)$
where $n=\sum\limits_{i=1}^m{n_i}$,~$s_i=\prod\limits_{j=1,j\neq{i}}^mn_j$ and
$\sigma_i(s_1,s_2,\dots,s_m)=\sum\limits_{1\leq{j_1}<j_2<\dots<j_i\leq{m}}{s_{j_1}s_{j_2}\dots{s_{j_i}}}$.
\end{proposition}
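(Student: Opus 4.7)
The plan is to leverage everything that has already been developed in the preceding pages so that the proposition drops out as a routine rescaling of a result already cited from \cite{Bapat2018}. First, I would note that $K^m_{n_1,n_2,\dots,n_m}$ is realized by the joining operation $\oplus_{\mathcal{H}_i\in S}^{1}\mathcal{H}_i$ with $S=\{\overline{K}^m_{n_i}:i=1,\dots,m\}$, exactly as in the example immediately preceding the proposition. Consequently the partition $\pi=\{V_1,\dots,V_m\}$ of $V$ into partite classes is equitable, so the spectrum of $A_{\mathcal{H}}$ splits into two pieces: the eigenvalues inherited from the participants via Corollary~\ref{cor1} and the $m$ eigenvalues of the quotient matrix $B$ given in \eqref{Qmatrix}.

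Next I would account for the $n-m$ trivial eigenvalues. Each participant $\overline{K}^m_{n_i}$ is edgeless, so $A_{\overline{K}^m_{n_i}}=0$ and every vector orthogonal to $\mathbf{1}$ is a non-Perron eigenvector with eigenvalue $0$; this gives $n_i-1$ such eigenvectors for each $i$. Because $k=m$, every edge of $\mathcal{H}$ meets each $V_i$ in exactly one vertex, so no edge can contain two vertices of a common partite class. Hence $d^{S(m)}_{pp}=0$ for all $p$, and Corollary~\ref{cor1} produces the eigenvalue $0-1\cdot d^{S(m)}_{pp}=0$ with multiplicity at least $\sum_i(n_i-1)=n-m$. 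This yields the factor $x^{n-m}$.

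For the remaining $m$ eigenvalues, I would write down the quotient matrix. The condition $k=m$ forces $i_j=1$ for every $j\neq p,q$ in the sum defining $d^{S(m)}_{pq}$, hence $d^{S(m)}_{pq}=\tfrac{1}{m-1}\prod_{j\neq p,q}n_j$ and $(B)_{pq}=n_q d^{S(m)}_{pq}=\tfrac{s_p}{m-1}$ for $p\neq q$ (with $(B)_{pp}=0$), producing the matrix displayed in the excerpt. I would then invoke the formula from \cite{Bapat2018} as stated, namely that the characteristic polynomial of $Q'_B=(m-1)B^{T}$ is $x^m-\sum_{i=2}^m(i-1)\sigma_i(s_1,\dots,s_m)x^{m-i}$. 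Since $B$ and $B^T$ share their characteristic polynomial, using $\det(xI-B)=(m-1)^{-m}\det((m-1)xI-Q'_B)$ and substituting $(m-1)x$ for $x$ gives the factor $x^m-\sum_{i=2}^m\tfrac{i-1}{(m-1)^i}\sigma_i(s_1,\dots,s_m)x^{m-i}$, and multiplying by $x^{n-m}$ finishes the proof.

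The only slightly delicate step is the combinatorial identification $d^{S(m)}_{pq}=\tfrac{1}{m-1}\prod_{j\neq p,q}n_j$: the constraint that $m-2$ units be distributed over the $m-2$ indices $j\neq p,q$ with each $i_j\geq 1$ uniquely forces every $i_j=1$ and $i_p=i_q=0$, which is what collapses $d^{S(m)}_{pp}$ to zero and pins down the off-diagonal of $B$. Once this is in hand the proposition is essentially a rescaling exercise on the cited polynomial.
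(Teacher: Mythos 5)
Your proposal is correct and follows essentially the same route as the paper: the paper does not print a separate proof, but the surrounding text establishes exactly the ingredients you use (the multiplicity-$(n-m)$ eigenvalue $0$ from Corollary~\ref{cor1} applied to the edgeless participants, the quotient matrix $B$ with off-diagonal entries $s_p/(m-1)$, and the characteristic polynomial of $(m-1)B^{T}$ from \cite{Bapat2018}, rescaled). Your explicit verification that $d^{S(m)}_{pp}=0$ and $d^{S(m)}_{pq}=\tfrac{1}{m-1}\prod_{j\neq p,q}n_j$ when $k=m$ is exactly the computation the paper leaves implicit.
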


From the above proposition it is clear that the quotient matrix $B$ is non-singular. Hence the multiplicity of eigenvalue $0$ of 
$K^m_{n_1,n_2,\dots,n_m}$ is $n-m$.

\begin{remark}
Note that $spec(K^m_{1,1,\dots,1})=\{1,-\dfrac{1}{m-1},-\dfrac{1}{m-1},\dots,-\dfrac{1}{m-1}\}$. Thus the
non-zero eigenvalues of $K^m_{n,n,\dots,n}$ are $\{n^{m-1},\overbrace{-\dfrac{n^{m-1}}{m-1},-\dfrac{n^{m-1}}{m-1},\dots,-\dfrac{n^{m-1}}{m-1}}^{m-1}\}$.
\end{remark}

\textbf{Note:} Let $\H$ be the $m$-uniform $m$-partite hypergraph $K^m_{\underbrace{n_1,n_1,\dots,n_1}_{l_1},\underbrace{n_2,n_2,\dots,n_2}_{l_2}}$, where $l_1+l_2=m$. 
Then the quotient matrix $B$ for the equitable partition formed by the $m$-parts of $\H$ can be written as $B=\dfrac{1}{m-1}B'$, 
where $B'$ is given by
$$
\begin{bmatrix}
s_1(J_{l_1}-I_{l_1})& s_1J_{l_1\times{l_2}}\\
s_2J_{l_2\times{l_1}}& s_2(J_{l_2}-I_{l_2})
\end{bmatrix},
$$ where
$s_1=n_1^{l_1-1}n_2^{l_2}$,~$s_2=n_1^{l_1}n_2^{l_2-1}$. Now using the Lemma~\ref{lemma2}, we have the characteristic polynomial of $B'$ as follows
\begin{align*}
f_{B'}(x)&=det(B'-xI)\\
&=det\bigg(s_2J_{l_2}-(s_2-x)I\bigg)det\bigg(s_1J_{l_1}-(s_1+x)I-s_1s_2J_{l_1\times l_2}(s_2J_{l_2}-(s_2+x)I)^{-1}J_{l_2\times l_1}\bigg)\\
&=(-1)^{m-2}(x+s_1)^{l_1-1}(x+s_2)^{l_2-1}(x-a^{+})(x-a^{-}),
\end{align*}
where $a^{\pm}=\dfrac{1}{2}\Big{[}s_1(l_1-1)+s_2(l_2-1)\pm\sqrt{\{s_1(l_1-1)+s_2(l_2-1)\}^2+4s_1s_2(l_1+l_2-1)}\Big{]}$. Thus the eigenvalues of $\H$ are $\dfrac{-s_i}{m-1}$ with the multiplicity $l_i-1$ for $i=1,2$ and
 $\dfrac{a^{\pm}}{m-1}$ with the multiplicity $1$.

\subsection{Weighted Joining of a Set of Non-Uniform Hypergraphs}

Let $S=\{\mathcal{H}_i(V_i,E_i,W_i):i=1,2,\dots,k\}$ be a set of hypergraphs with $|V_i|=n_i$. Now for $T_s\subseteq{\{k,k+1,\dots,\sum\limits_{i=1}^kn_i\}}$,
we define the join $\H_{T_s}=\oplus^{T_s}_{\H_i\in{S}}\H_i$ as a hypergraph with the vertex set $V=\cup_{i=1}^kV_i$, the edge set $E=\cup_{i=0}^kE_i$, where 
$E_0=\{e\subseteq{V}:e\cap{V_i}\neq{\phi},\forall{i=1,2.\dots,k},|e|\in{T_s}\}$ and the weight function $W:E\rightarrow{\mathbb{R}_{\geq 0}}$ defined by
$$
W(e)=
\begin{cases}
W_i(e) &\text{if $e\in{E_i}$},\\
w_m &\text{if $e\in{E_0}$ and $|e|=m$,}
\end{cases}
$$
where $w_m$ are non-negative constants for each $m\in{T_s}.$
The adjacency matirx $A_{\mathcal{H}_{T_s}}$ of the hypergraph $\mathcal{H}_{T_s}$ is given by
$$
(A_{\mathcal{H}_{T_s}})_{ij}=
\begin{cases}
\left(A_{\mathcal{H}_p}\right)_{ij}+\sum\limits_{m\in{T_S}}w_md^{S(m)}_{pp} &\text{if $i, j\in{V_p}$,~$i\neq{j}$},\\
0 &\text{$i=j$,}\\
\sum\limits_{m\in{T_S}}w_md^{S(m)}_{pq} &\text{if $i\in{V_p}, j\in{V_q}, p\neq{q}$.}
\end{cases}
$$
When $\mathcal{H}_i$'s are regular, $\pi=\{V_1,V_2,\dots,V_k\}$ forms an
equitable partition for $\mathcal{H}_{T_s}$. If $\mathcal{H}_i$'s are $r_i$-regular then the quotient matrix $B$ is given by
$$
(B)_{pq}=
\begin{cases}
r_p+(n_p-1)\sum\limits_{m\in{T_S}}w_md^{S(m)}_{pp} &\text{if $p=q$,}\\
n_q\sum\limits_{m\in{T_S}}w_md^{S(m)}_{pq} &\text{otherwise.}
\end{cases}
$$

\subsubsection{Weighted Joining of Non-Uniform Hypergraphs on a Backbone Hypergraph}

Let $\H(V,E,W)$ be a hypergraph. We consider the hypergraph $\mathcal{H}_b(V_b,E_b,W_b)$, with the vertex set $V_b=\{1,2,\dots,n\}$
as a backbone of $\H$ if there exist a set $S=\{\H_i(V_i,E_i,W_i):i=1,2,\dots n\}$ of hypergraphs and for each $e\in{E_b}$ there exists a
subset
$T_e\subseteq{\{|\eta|: \eta\in{E}\}}$ such that $\H$ can be constructed by following steps
\begin{enumerate}
\item Replace vertex $i$ of $\H_b$ by the hypergraph $\H_i$.
\item For each edge $e=\{j_1,j_2,\dots,j_k\}\in{E_b}$, take $S_e=\{\mathcal{H}_{j_i}:i=1,2\dots,k\}$ and 
consider the operation $\oplus^{T_e}_{\H_i\in{S_e}}\H_i$
\item Consider $w_m$ as edge weight for every new edge of cardinality $m$. 
\end{enumerate}
Here  
$\mathcal{H}_i$'s are participants and we write $\H=(\H_b,S,T)$ where $T=\{T_e:e\in{E_b}\}$. 
The adjacency matrix $A_{\mathcal{H}}$ of $\mathcal{H}$ can be written by using the adjacency matrices of $\H_i$ as follows
$$
(A_{\mathcal{H}})_{ij}=
\begin{cases}
(A_{\mathcal{H}_p})_{ij}+\sum\limits_{e\in{E_B}, p\in{e}}\sum\limits_{m\in{T_e}}w_md^{S_e(m)}_{pp} &\text{if i, j$\in{V_p}$,~$i\neq{j}$,}\\
0 &\text{if $i=j$,}\\
\sum\limits_{e\in{E_B}, p, q\in{e}}\sum\limits_{m\in{T_e}}w_md^{S_e(m)}_{pq} &\text{if $i\in{V_p}$,~$j\in{V_q}$,~$p\neq{q}$.}
\end{cases}
$$
Now we have a theorem similar to the Theorem~\ref{th1}.  

\begin{theorem}\label{th3}
Let $\mathcal{H}_b(V_b,E_b,W_b)$ be a hypergraph with $|V_b|=n$. Let $S=\{\mathcal{H}_i(V_i,E_i):i=1,2,\dots,n\}$ be a collection of regular
hypergraphs and $\H=(H_b,S,T)$. Then for any non-perron eigenvalue $\lambda$ of $A_{\mathcal{H}_p}$ with the multiplicity
$l$,~$\lambda-\sum\limits_{e\in{E_b},p\in{e}}\sum\limits_{m\in{T_e}}w_md^{S_e{(m)}}_{pp}$ is an eigenvalue of $A_{\mathcal{H}}$ 
with the multiplicity atleast $l$.
\end{theorem}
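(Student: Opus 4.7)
The plan is to lift eigenvectors from a single block $A_{\H_p}$ to the full matrix $A_{\H}$ by extending them by zero, following the same pattern as the proof of Theorem~\ref{th1}. Specifically, given a non-Perron eigenpair $(\lambda, f)$ of $A_{\H_p}$, since $\H_p$ is regular the Perron eigenvector is proportional to the all-ones vector, so by orthogonality of eigenspaces of the real symmetric matrix $A_{\H_p}$ we may assume $\sum_{u \in V_p} f(u) = 0$. The non-Perron $\lambda$-eigenspace sits entirely in this hyperplane and has dimension $l$, contributing $l$ linearly independent candidates. I would then define $f^{*}:V \to \mathbb{R}$ by $f^{*}(v) = f(v)$ for $v \in V_p$ and $f^{*}(v) = 0$ otherwise, and show that $f^{*}$ is an eigenvector of $A_{\H}$ with eigenvalue $\lambda - C$, where $C := \sum_{e \in E_b,\, p \in e} \sum_{m \in T_e} w_m d^{S_e(m)}_{pp}$.

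The verification splits into two cases using the explicit block form of $A_{\H}$ stated immediately before the theorem. For $v \in V_p$, the intra-block entries are $(A_{\H_p})_{vu} + C$, so summing $f(u)$ over $u \in V_p \setminus \{v\}$ against the constant $C$ gives $C(0 - f(v)) = -Cf(v)$ by the zero-sum condition; adding the contribution $\lambda f(v)$ from $(A_{\H_p} f)(v)$ produces $(\lambda - C)f(v)$. For $v \in V_q$ with $q \neq p$, the cross-block entry $(A_{\H})_{vu}$ equals the single constant $\sum_{e \in E_b,\, p,q \in e} \sum_{m \in T_e} w_m d^{S_e(m)}_{pq}$ which depends only on the pair $(p,q)$; it therefore factors out of $\sum_{u \in V_p}(A_{\H})_{vu} f(u)$ and is annihilated again by the zero-sum condition, matching $(\lambda - C)\cdot 0 = (\lambda - C)f^{*}(v)$.

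The multiplicity bound then follows because the lifting map $f \mapsto f^{*}$ is linear and injective on the non-Perron $\lambda$-eigenspace of $A_{\H_p}$, transporting an $l$-dimensional subspace into the $(\lambda - C)$-eigenspace of $A_{\H}$. I do not expect a serious obstacle here; the argument is a direct transcription of the proof of Theorem~\ref{th1}, with the sole difference that the shift constant $C$ is now a double sum over backbone edges $e$ containing $p$ and over cardinalities $m \in T_e$, with weights $w_m$, rather than a single weight $W_b(e)$ times a single codegree. The crucial structural fact that makes the computation go through unchanged is that both the diagonal correction in each block and the off-diagonal constant between blocks depend only on the cluster indices, not on individual vertices, so that the orthogonality of $f$ to $\mathbf{1}_{V_p}$ does all the cancellation work in both cases.
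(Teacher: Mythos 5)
Your proposal is correct and follows exactly the route the paper intends: the paper gives no separate proof for this theorem, stating only that it is "similar to Theorem~\ref{th1}", and your zero-extension of an eigenvector orthogonal to the constant vector, with the cancellation of the constant intra- and inter-block shifts via the zero-sum condition, is precisely that argument carried out in full. Your version is in fact more explicit than the paper's (which omits the cross-block verification and the injectivity remark), but it is the same proof.
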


The remaining $n$ eigenvalues of $A_{\mathcal{H}}$ are the eigenvalues of the 
quotient matrix $B$ formed the equitable partition $\pi=\{V_1,V_2,\dots,V_n\}$. If $\H_i$'s are $r_i$ regular, then  $B$ can be written as
$$
(B)_{pq}=
\begin{cases}
r_p+(n_p-1)\sum\limits_{e\in{E_B},p\in{e}}\sum\limits_{m\in{T_e}}w_md^{S_e(m)}_{pp} &\text{if $p=q$,}\\
n_q\sum\limits_{e\in{E_B},p,q\in{e}}\sum\limits_{m\in{T_e}}d^{S_e(m)}_{pq} &\text{otherwise.}
\end{cases}
$$

\section{Generalized Corona of Hypergraphs}
Let $\mathcal{H}(V,E)$ be an $m$-uniform hypergraph. A subhypergraph induced by $V^{'}\subset{V}$ is the hypergraph $\mathcal{H}[V^{'}]$ with the vertex
set $V^{'}$ and edge set $E^{'}=\{e: e\in{E}, e\subset{V^{'}}\}$. Now for $V^{'}\subset{V}$ and a hypergraph $\H''(V'',E'')$, we denote $V^{'}\oplus{\mathcal{H}^{''}}$ as
the hypergraph with the vertex set $V^{'}\cup{V^{''}}$ and edge set $E(\mathcal{H}[V^{'}]\oplus{\mathcal{H}^{''}})\cup{E(\mathcal{H})}$.

\subsection{Vertex Corona}
Let $\mathcal{H}_0(V_0,E_0)$ be an $m$-uniform hypergraph and $\pi=\{V^{(1)}_0,V^{(2)}_0,\dots,V^{(k)}_0\}$ be a partition of $V_0=\{1,2,\dots,n(=pk)\}$
with $V^{(i)}_0=\{(i-1)p+1,(i-1)p+2,\dots,ip\}$ for $i=1,2,\dots,k$. Also let $\{\mathcal{H}_i(V_i,E_i):1\leq{i}\leq{k}\}$ be a set of $m$-uniform
hypergraphs with $|V_i|=n_i$. For each $i=1,2,\dots,k$, we take $p$ copies, $\{\mathcal{H}^{(j)}_i(V^{(j)}_i,E^{(j)}_i):j=1,2\dots,p\}$,
of $\mathcal{H}_i(V_i,E_i)$. Then we consider $V^{(i)}_0\oplus{\mathcal{H}^{j}_i(V^{(j)}_i,E^{(j)}_i)}$ for all $i=1,2,\dots,k$,~$j=1,2,\dots,p$.
This gives us an $m$-uniform hypergraph $\H_{\pi}(V,E)$ which is called \textit{generalized corona} of hypergraphs and we write 
$\mathcal{H}_{\pi}=\mathcal{H}_0\circ^k_p{\mathcal{H}_i}$.\\

Here, we consider the case when $n_i=n_1$ and $\H_i$'s are $r_1$-regular. Now to find the characteristic polynomial we have the following theorem. We denote
$a=\dfrac{p}{m-1}\Big[\binom{n+n_1-2}{m-2}-\binom{n}{m-2}\Big]$,~$b=\binom{n+n_1-2}{m-1}$,
~$c=\dfrac{1}{m-1}\Big[\binom{n+n_1-2}{m-2}-\binom{n_1-2}{m-2}\Big]$,~$D_i=A_{\mathcal{H}_i}+c(J_{n_1}-I_{n_1})$,~
$D=diag(I_p\otimes{D_1},I_p\otimes{D_2},\dots,I_p\otimes{D_k})$ and $S=I_k\otimes{J_{p\times{pn_1}}}$.
The kronecker product $A\otimes B$ between two matrices $A=(a_{ij})$ and $B=(b_{pq})$ is defined as the partition matrix $(a_{ij}B)$. For matrices $A,B,C$ and $D$ we have $AB\otimes CD=(A\otimes C)(B\otimes D)$, when multiplication makes sense. Now we use the following lemma to prove the next theorem.

\begin{lemma}[Schur complement, \cite{Bapat2010}]\label{lemma2}
Let $A$ be an $n\times{n}$ matrix partitioned as 
	$$
	A=
	\begin{bmatrix}
	A_{11}&A_{12}\\
	A_{21}&A_{22}
	\end{bmatrix}
	$$
	where $A_{11}$ and $A_{22}$ are square matrices. If $A_{11}$ and $A_{22}$ are invertible, then
	\begin{align}
	det(A)&=det(A_{11})det(A_{22}-A_{12}A^{-1}_{11}a_{21})\\
	&=det(A_{22})det(A_{11}-A_{21}A^{-1}_{11}A_{12}).
	\end{align}
\end{lemma}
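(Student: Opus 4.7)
The plan is to establish both identities via an explicit block LU factorization of $A$. For the first identity, under the invertibility of $A_{11}$ one has the factorization
$$A = \begin{bmatrix} I & 0 \\ A_{21}A_{11}^{-1} & I \end{bmatrix} \begin{bmatrix} A_{11} & A_{12} \\ 0 & A_{22} - A_{21}A_{11}^{-1}A_{12} \end{bmatrix},$$
which may be verified by direct block multiplication. The first factor is block lower triangular with identity diagonal blocks, so its determinant equals $1$. The second factor is block upper triangular, so its determinant equals $\det(A_{11}) \det(A_{22} - A_{21}A_{11}^{-1}A_{12})$; this block-triangular determinant identity is itself a standard consequence of Laplace cofactor expansion along the zero block, proved by induction on the size of $A_{11}$. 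Multiplicativity of the determinant then yields the first formula.

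For the second identity, a symmetric argument applies under the invertibility of $A_{22}$: I would use the factorization
$$A = \begin{bmatrix} I & A_{12}A_{22}^{-1} \\ 0 & I \end{bmatrix} \begin{bmatrix} A_{11} - A_{12}A_{22}^{-1}A_{21} & 0 \\ A_{21} & A_{22} \end{bmatrix},$$
and take determinants as before. Alternatively one can swap the two block rows and simultaneously the two block columns of $A$ — an operation implemented by conjugation with the block permutation matrix that interchanges the two block coordinates, whose determinant squares to $1$ and therefore contributes no sign overall — thereby reducing the second identity to the first with the roles of the diagonal blocks exchanged.

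The main subtlety, and the reason both versions are stated separately, is that each formula requires the corresponding diagonal block to be invertible so that the factorization is actually defined; there is no deeper obstacle in the argument. The computation is in essence one step of block Gaussian elimination recorded algebraically, and once the block-triangular determinant formula is in hand the proof is mechanical.
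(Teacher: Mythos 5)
The paper gives no proof of this lemma at all: it is imported as a known result from \cite{Bapat2010}, so there is no in-paper argument to compare yours against. Your proof is the standard one and it is correct. Both block factorizations multiply out to $A$ (this is easily checked entrywise on the four blocks), the unitriangular factors have determinant $1$, and the block-triangular determinant formula --- which you rightly isolate as the only genuinely substantive ingredient, provable by Laplace expansion and induction --- yields the two product formulas. Your alternative derivation of the second identity from the first, by conjugating with the block-swap permutation matrix, is also sound.

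One point you should make explicit rather than correct silently: what you prove is the \emph{intended} statement, not the statement as literally printed. As written, the lemma asserts $\det(A)=\det(A_{11})\det(A_{22}-A_{12}A_{11}^{-1}a_{21})$ and $\det(A)=\det(A_{22})\det(A_{11}-A_{21}A_{11}^{-1}A_{12})$; if $A_{11}$ is $k\times k$ and $A_{22}$ is $(n-k)\times(n-k)$ with $k\neq n-k$, the expressions $A_{12}A_{11}^{-1}$ and $A_{21}A_{11}^{-1}A_{12}$ are not even conformable with the blocks they are subtracted from, so these are typographical errors. The correct identities are
\begin{align*}
\det(A)&=\det(A_{11})\det\left(A_{22}-A_{21}A_{11}^{-1}A_{12}\right)=\det(A_{22})\det\left(A_{11}-A_{12}A_{22}^{-1}A_{21}\right),
\end{align*}
which is exactly what your factorizations produce and what the paper actually invokes later (e.g.\ in Theorems~\ref{th4} and \ref{th6}). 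With that caveat recorded, your argument is complete.
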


\begin{theorem}\label{th4}
Characteristic polynomial of $A_{\H_{\pi}}$ can be expressed as 
\begin{align}\label{Eq3}
f_{\H_{\pi}}(x)=\bigg(\prod\limits_{i=1}^kdet(D_i-xI_{n_1})\bigg)^pdet\bigg(A_{\H_0}+I_k\otimes{((a-\dfrac{b^2pn_1}{r_1+(n_1-1)c-x})J_p-(a+x)I_p)}\bigg).
\end{align}
\end{theorem}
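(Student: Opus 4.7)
The plan is to compute $f_{\H_\pi}(x)$ by writing $A_{\H_\pi}-xI$ in block form and applying the Schur complement formula of Lemma~\ref{lemma2}. I would order the vertices of $\H_\pi$ with the attached copies $\H^{(j)}_i$ first (grouped by $i$, then by $j$) and the backbone $V_0$ last, producing
\[
A_{\H_\pi}-xI=\begin{bmatrix} M_1-xI_{pkn_1} & M_{12}\\ M_{12}^{t} & M_2-xI_{pk}\end{bmatrix},
\]
where each block is read off from the adjacency formula for joins of hypergraphs developed in Section~3.

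Identifying the blocks would be the first serious step. Inside a single copy $V^{(j)}_i$, the original edges contribute $A_{\H_i}$ while the join with the backbone contributes the uniform off-diagonal constant $c$, giving $A_{\H_i}+c(J_{n_1}-I_{n_1})=D_i$; and distinct copies share no edge. Hence $M_1=D=\operatorname{diag}(I_p\otimes D_1,\ldots,I_p\otimes D_k)$, so $\det(M_1-xI)=\prod_{i=1}^{k}\det(D_i-xI_{n_1})^{p}$. Within $V_0$, the $p$ joins attached to each cell $V^{(i)}_0$ add a uniform off-diagonal constant $a$, so the bottom-right block is $A_{\H_0}+I_k\otimes(aJ_p-aI_p)$, and after subtracting $xI_{pk}$ this supplies the $-(a+x)I_p$ piece in~\eqref{Eq3}. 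The off-diagonal $M_{12}$ is a constant on each rectangular block connecting a cell of $V_0$ with its $p$ matched copies (and zero elsewhere), which is exactly the sparsity pattern of $S=I_k\otimes J_{p\times pn_1}$.

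Applying Lemma~\ref{lemma2} then yields
\[
f_{\H_\pi}(x)=\det(M_1-xI)\,\det\!\bigl(M_2-xI-M_{12}^{\,t}(M_1-xI)^{-1}M_{12}\bigr),
\]
so it remains to simplify the Schur correction. Here the regularity of the $\H_i$ is crucial: each column of $M_{12}$ is constant along every copy $V^{(j)}_i$, so it lies in the span of the all-ones vector $\mathbf 1_{n_1}$ of that copy, and by $r_1$-regularity this vector is an eigenvector of $D_i$ with eigenvalue $r_1+(n_1-1)c$. Therefore $(D_i-xI)^{-1}$ acts on the relevant subspace by the scalar $(r_1+(n_1-1)c-x)^{-1}$, and a direct multiplication reduces $M_{12}^{\,t}(M_1-xI)^{-1}M_{12}$ to $I_k\otimes\frac{b^{2}pn_1}{r_1+(n_1-1)c-x}\,J_p$. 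Adding this to $M_2-xI$ produces the second determinant in~\eqref{Eq3}.

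The main obstacle would be the combinatorial bookkeeping in the block identification: showing that the aggregate contribution of the $p$ joins attached to each cell yields precisely the constants $a$, $b$, $c$ of the theorem requires careful $m$-subset counting using the quantities $d^{S_e(m)}_{pq}$ from Section~3. Once this identification is established, the rank-one action of $M_{12}$ on each copy---a direct consequence of the regularity hypothesis on the participants---turns the Schur step into a short scalar computation, after which the stated formula is purely algebraic.
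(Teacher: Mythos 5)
Your proposal is correct and takes essentially the same route as the paper: the identical block decomposition of $A_{\H_\pi}$ (with the roles of the two diagonal blocks merely swapped in order), the Schur complement of Lemma~\ref{lemma2}, and the observation that regularity makes the all-ones vector of each copy an eigenvector of $D_i$ with eigenvalue $r_1+(n_1-1)c$, so that the correction term collapses to $\frac{b^2pn_1}{r_1+(n_1-1)c-x}\,I_k\otimes J_p$ via $SS^T=pn_1 I_k\otimes J_p$. The combinatorial identification of $a$, $b$, $c$ that you flag as the main obstacle is likewise taken as given in the paper, so there is no substantive difference.
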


\begin{proof}

The adjacency matrix of $\H_{\pi}$ is given by
$$
A_{\H_{\pi}}=
\begin{bmatrix}
A_{\H_0}+aI_k\otimes{(J_{p}-I_p)}&bS\\
bS^T&D\\
\end{bmatrix}.
$$
Using Lemma~\ref{lemma2} we have
\begin{align}
f_{\H_{\pi}}(x)&=det(A_{\H_{\pi}}-xI) \notag \\
&=det(D-xI)det\{A_{\H_0}+aI_k\otimes{(J_p-I_p)}-b^2S(D-xI)^{-1}S^T\}.\label{Eq4}
\end{align}
Again,
\begin{align*}
(D-xI)S^T&=diag(I_p\otimes(D_1-xI_{n_1}),I_p\otimes(D_2-xI_{n_1}),\dots,I_p\otimes(D_k-xI_{n_1}))(bI_k\otimes{J_{pn_1\times{p}}})\\
&=b\{r_1+c(n_1-1)-x\}(I_k\otimes{J_{pn_1\times{p}}})\\
&=b\{r_1+c(n_1-1)-x\}S^T.
\end{align*}
Therefore,
\begin{align}\label{Eq5}
S^T=\dfrac{1}{r_1+(n_1-1)c-x}(D-xI)S^T.
\end{align}
Also
\begin{align}\label{Eq6}
SS^T=pn_1I_k\otimes{J_p}.
\end{align}
Using the Equations (\ref{Eq5}) and (\ref{Eq6}) in the Equation (\ref{Eq4}) we get the Equation (\ref{Eq3}).

\end{proof}

\begin{remark}
For two regular hypergraphs $\H_1,\H_2$, we can find eigenvalues of $\H:=\H_1\oplus\H_2$ by using Theorem \ref{th1}. Again we can find the characteristic polynomial of $\H$. If $spec(A_{\H_i})=\{\lambda^{(i)}_1,\lambda^{(i)}_2,\dots,\lambda^{(i)}_{n_i}(=r_i)\}$
for $i=1,2$ then the characteristic polynomial of $A_{\H}$ is given by
\begin{align*}
f_{\H}(x)=f_B(x)\prod\limits_{i=1}^{n_1-1}(\lambda^{(1)}_i-d^{11}_{(m)}-x)\prod\limits_{i=1}^{n_2-1}(\lambda^{(2)}_i-d^{22}_{(m)}-x),
\end{align*}
where $f_B(x)=det(B-xI)$.
\end{remark}

\begin{corollary}\label{cor3}
Let p=1 and $spec(A_{\H_0})=\{\mu_i:i=1,2\dots,n\}$,~$spec(A_{\H_i})=\{\lambda^{(i)}_1,\lambda^{(i)}_2,\dots,\lambda^{(i)}_{n_1}(=r_1)\}$.
Then the adjacency eigenvalues of $\H_{\pi}=\H_0\circ^n_1{\H_i}$ are $\lambda^{(i)}_j-c$ with the multiplicity one for $i=1,2,\dots,n$~$j=1,2,\dots,n_1-1$ and $\alpha^{\pm}_i$ with the multiplicity one for $i=1,2,\dots,n$
where $\alpha^{\pm}_i=\dfrac{1}{2}\Big[r_1+(n_1-1)c+\mu_i\pm\sqrt{\{r_1+(n_1-1)c-\mu_i\}^2+4b^2n_1}\Big]$.

\end{corollary}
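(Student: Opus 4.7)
The plan is to specialize Theorem~\ref{th4} to $p=1$ and then read off the roots of the resulting two factors, handling carefully the apparent pole at $x=r_1+(n_1-1)c$.

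First I would substitute $p=1$ (so $k=n$) into the formula (\ref{Eq3}). Since $J_1=I_1=[1]$, the Kronecker factor $I_k\otimes\bigl((a-\tfrac{b^2pn_1}{r_1+(n_1-1)c-x})J_p-(a+x)I_p\bigr)$ collapses to $-\bigl(x+\tfrac{b^2 n_1}{r_1+(n_1-1)c-x}\bigr)I_n$, so that the second determinant in (\ref{Eq3}) becomes
\begin{equation*}
\det\!\Bigl(A_{\H_0}-\bigl(x+\tfrac{b^2 n_1}{r_1+(n_1-1)c-x}\bigr)I_n\Bigr)=\prod_{i=1}^n\Bigl(\mu_i-x-\tfrac{b^2 n_1}{r_1+(n_1-1)c-x}\Bigr).
\end{equation*}

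Next I would compute $\mathrm{spec}(D_i)$, where $D_i=A_{\H_i}+c(J_{n_1}-I_{n_1})$. Since $\H_i$ is $r_1$-regular, $\mathbf{1}$ is an eigenvector of $A_{\H_i}$ with eigenvalue $r_1$, while every eigenvector $f$ of $A_{\H_i}$ orthogonal to $\mathbf{1}$ satisfies $(J_{n_1}-I_{n_1})f=-f$. Hence $D_i$ has Perron eigenvalue $r_1+(n_1-1)c$ (multiplicity $1$) and eigenvalues $\lambda^{(i)}_j-c$ for $j=1,\dots,n_1-1$. Therefore the first factor of (\ref{Eq3}) (with $p=1$) equals
\begin{equation*}
\prod_{i=1}^{n}\det(D_i-xI_{n_1})=\prod_{i=1}^{n}\Bigl(r_1+(n_1-1)c-x\Bigr)\prod_{j=1}^{n_1-1}\bigl(\lambda^{(i)}_j-c-x\bigr).
\end{equation*}

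Now I would combine the two pieces. Clearing denominators inside the second factor via
\begin{equation*}
\mu_i-x-\tfrac{b^2 n_1}{r_1+(n_1-1)c-x}=\frac{(\mu_i-x)\bigl(r_1+(n_1-1)c-x\bigr)-b^2 n_1}{r_1+(n_1-1)c-x},
\end{equation*}
the $n$ copies of $r_1+(n_1-1)c-x$ cancel exactly against the product $\prod_{i=1}^{n}(r_1+(n_1-1)c-x)$ coming from the Perron parts of the $D_i$, leaving
\begin{equation*}
f_{\H_\pi}(x)=\prod_{i=1}^{n}\prod_{j=1}^{n_1-1}\bigl(\lambda^{(i)}_j-c-x\bigr)\,\prod_{i=1}^{n}\Bigl[(\mu_i-x)\bigl(r_1+(n_1-1)c-x\bigr)-b^2 n_1\Bigr].
\end{equation*}
This immediately yields the eigenvalues $\lambda^{(i)}_j-c$ with multiplicity one. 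For the remaining quadratic factor in each $i$, expanding gives $x^2-[r_1+(n_1-1)c+\mu_i]x+\mu_i[r_1+(n_1-1)c]-b^2 n_1$, and applying the quadratic formula—together with the identity $[r_1+(n_1-1)c+\mu_i]^2-4\mu_i[r_1+(n_1-1)c]=[r_1+(n_1-1)c-\mu_i]^2$—produces exactly the two roots $\alpha^{\pm}_i$.

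The main obstacle I anticipate is bookkeeping the cancellation of the spurious denominator $[r_1+(n_1-1)c-x]^n$ in (\ref{Eq3}) against the Perron contributions of the $D_i$'s (so that the resulting formula is a genuine polynomial of the right degree $n(1+n_1)$), and then verifying that the discriminant of the quadratic collapses to the clean form $[r_1+(n_1-1)c-\mu_i]^2+4b^2 n_1$ asserted in the statement; both are algebraic rather than conceptual, but they are where the proof has to be executed carefully.
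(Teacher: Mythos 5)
Your proposal is correct and follows essentially the same route as the paper: specialize Theorem~\ref{th4} to $p=1$, $k=n$, diagonalize the scalar-shifted copy of $A_{\H_0}$ to get the quadratic factors $(x-\alpha_i^+)(x-\alpha_i^-)$ over $r_1+(n_1-1)c-x$, and cancel the resulting denominator $\{r_1+(n_1-1)c-x\}^n$ against the Perron eigenvalues of the blocks $D_i$. The only difference is cosmetic (you clear denominators before multiplying the two factors, while the paper keeps $\det(X)$ as a rational expression until the end), and your discriminant simplification matches the paper's $\alpha_i^{\pm}$ exactly.
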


\begin{proof}
For $p=1$,~$k=n$, we have,
\begin{align*}
&A_{\H_0}+I_k\otimes{((a-\dfrac{b^2pn_1}{r_1+(n_1-1)c-x})J_p-(a+x)I_p)}=:X(say) \\
&=A_{\H_0}+\dfrac{x^2-(r_1-(n_1-1)c)x-b^2n_1}{r_1+(n_1-1)c-x}I_n.
\end{align*}
Now, $spec(X)=\{\mu_i+\dfrac{x^2-(r_1-(n_1-1)c)x-b^2n_1}{r_1+(n_1-1)c-x}:i=1,2,\dots,n\}$.\\
Thus, 
\begin{align}\label{Eq8}
det(X)&=\prod\limits_{i=1}^n{\{\mu_i+\dfrac{x^2-(r_1-(n_1-1)c)x-b^2n_1}{r_1+(n_1-1)c-x}\}} \notag \\
&=\dfrac{\prod\limits_{i=1}^n{(x-\alpha^{+}_i)(x-\alpha^{-}_i)}}{\{r_1+(n_1-1)c-x\}^n}.
\end{align}
Again for each $i=1,2,\dots,n$
\begin{align*}
spec(D_i-xI_{n_1})=\{\lambda^{(i)}_j-c-x:j=1,2\dots,n_1-1\}\cup\{r_1+(n_1-1)c-x\}.
\end{align*}
We have
\begin{align}\label{Eq9}
\prod\limits_{i=1}^ndet(D_i-xI_{n_1})=\{r_1+(n_1-1)c-x\}^n\prod\limits_{i=1}^n\prod\limits_{j=1}^{n_1-1}(\lambda^{(i)}_j-c-x).
\end{align}
Using the Equations $(\ref{Eq8})$ and (\ref{Eq9}) in the Equation (\ref{Eq3}) we have our desired result.
\end{proof}

Note that for two non-isomorphic hypergraphs $\mathcal{H}_0$ and $\mathcal{G}_0$ we get the hypergraphs $\mathcal{H}=\mathcal{H}_0\circ^k\mathcal{H}_i$ and
$\mathcal{G}=\mathcal{G}_0\circ^k{\mathcal{H}_i}$ which are also non-isomorphic. So by using the Corollary~\ref{cor3}, for any two non-isomorphic co-spectral hypergraphs
$\mathcal{H}_0$ and $\mathcal{G}_0$, we can find infinitely many pair of non-isomorphic co-spectral hypergraphs. To start with, first we can construct a pair of non-isomorphic co-spectral hypergraphs using the procedure similar to the Godsil-McKay switching \cite{Godsil1982, Zhou} as follows: \\

Let $\mathcal{H}(V,E)$ be an m-uniform hypergraph. Also let $\rho=\{V_1,V_2,\dots,V_k,D\}$,~$(where |D|=m-1,|V_i|=2t$,~$t\in{\mathbb N}$, for all $i=1,2,\dots,k)$
 be a partition of $V$ satisties the following conditions
 \begin{enumerate}
\item Let $p,q\in\{1,2,\dots,k\}$. Then for any $i,j\in{V_p}$
$$\sum_{l\in{V_q}}A_{il}=\sum_{l\in{V_q}}A_{jl}=B'_{pq}.$$
Also for any $i,j\in{V_q}$,
$$\sum_{l\in{V_p}}A_{il}=\sum_{l\in{V_p}}A_{jl}=B'_{qp}$$ and $B'_{pq}=B'_{qp}$.
\item
Let $D=\{u_1,u_2,\dots,u_{m-1}\}$. For each $e\in{E}$,~$|e\cap{D}|=0$ or $m-1$. Also $|N(u_1,u_2,\dots,u_{m-1})\cap{V_p}|=0,t$
or $2t$; where $N(u_1,u_2,\dots,u_{m-1})=\{v\in{V}:\{u_1,u_2,\dots,u_{m-1},v\}\in{E}\}$.
\end{enumerate}
Now we consider  $p\in\{1,2,\dots,k\}$ for which $|N(u_1,u_2,\dots,u_{m-1})\cap{V_p}|=t$. Suppose that $V_p=\{v^{(p)}_1,v^{(p)}_2,\dots,v^{(p)}_{2t}\}$ and
$N(u_1,u_2,\dots,u_{m-1})\cap{V_p}=\{v^{(p)}_1,v^{(p)}_2,\dots,v^{(p)}_t\}$.
We remove the edges $\{u_1,u_2, \dots, $ $u_{m-1},v^{(p)}_i\}$ for $i=1,2,\dots,t$ and consider 
$\{u_1,u_2,\dots,u_{m-1},v^{(p)}_j\}$ for $j=t+1,t+2,\dots,2t$ as new edges. In this way, from $\mathcal{H}$ we get a new 
$m$-uniform hypergraph $\mathcal{H}_{\rho}$.

\begin{proposition}\label{prop3}
Let $\H$ and $\H_{\rho}$ be the $m$-uniform hypergraphs as described above. Then $\H$ and $\H_{\rho}$ are cospectral.
\end{proposition}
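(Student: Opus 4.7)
The plan is to prove cospectrality by exhibiting an orthogonal (in fact involutory) matrix $Q$ with $QA_{\H}Q^{-1}=A_{\H_\rho}$, in the spirit of the Godsil--McKay switching extended to the hypergraph setting. Once such a $Q$ is produced, cospectrality is immediate since similar matrices share characteristic polynomials.

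Block the rows and columns of $A_{\H}$ according to $\rho=\{V_1,\ldots,V_k,D\}$ and set $Q=\operatorname{diag}(Q_1,\ldots,Q_k,I_{m-1})$, with
$$
Q_p=\begin{cases}\tfrac{1}{t}J_{2t}-I_{2t} & \text{if }|N(u_1,\ldots,u_{m-1})\cap V_p|=t,\\ I_{2t} & \text{otherwise.}\end{cases}
$$
A short computation gives $Q_p^2=I_{2t}$, so $Q$ is symmetric and orthogonal with $Q^{-1}=Q$. Its decisive features are $Q_p\mathbf{1}_{V_p}=\mathbf{1}_{V_p}$ and, in the nontrivial case $|N(D)\cap V_p|=t$, that $Q_p$ interchanges the characteristic vectors of $N(D)\cap V_p$ and $V_p\setminus N(D)$.

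I would then verify $QA_\H Q=A_{\H_\rho}$ block-by-block. The $(D,D)$ block is unchanged because the switching removes $t$ edges of the form $\{u_1,\ldots,u_{m-1},v^{(p)}_i\}$ and inserts $t$ new ones of exactly the same shape, leaving every codegree within $D$ invariant. For each $(V_p,D)$ block, Condition~2 forces the column indexed by any $u\in D$ to be $\tfrac{1}{m-1}$ times the characteristic vector of $N(D)\cap V_p$, which is either $\mathbf{0}$, $\mathbf{1}_{V_p}$, or $\mathbf{1}_{N(D)\cap V_p}$; the first two are fixed by $Q_p$, while in the third case $Q_p$ sends it to $\mathbf{1}_{V_p\setminus N(D)}$, matching exactly the new column of $A_{\H_\rho}$. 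For each $(V_p,V_q)$ block with $p,q\le k$, the switching modifies no edges, so it remains to check $Q_pA_\H[V_p,V_q]Q_q=A_\H[V_p,V_q]$; expanding via the identities $A_\H[V_p,V_q]J=B'_{pq}J$, $JA_\H[V_p,V_q]=B'_{qp}J$, $JJ=2tJ$ and invoking the symmetry $B'_{pq}=B'_{qp}$ from Condition~1 reduces this to a cancellation of $J$--terms.

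The main obstacle I foresee lies in the mixed $(V_p,V_q)$ case in which exactly one of $Q_p,Q_q$ is a switching matrix: the asymmetric $\tfrac{1}{t}J_{2t}-I_{2t}$ must then be absorbed using only the row/column sum conditions of Condition~1, so most of the care in the proof will go into balancing the residual $\tfrac{1}{t}A_\H[V_p,V_q]J-A_\H[V_p,V_q]$ against $A_\H[V_p,V_q]$ itself and extracting the needed identity from the symmetry of the quotient. Once this is settled, the remaining verifications are routine bookkeeping built on top of the involutory switching matrix $Q$.
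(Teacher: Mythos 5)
Your overall strategy --- conjugating $A_{\H}$ by an involutory block-diagonal switching matrix and concluding cospectrality from similarity --- is exactly the paper's, and your treatment of the $(V_p,D)$ blocks and of the $(V_p,V_q)$ blocks in which both cells are switched is sound. The genuine gap is the mixed case that you flag at the end, and it is not a matter of extra care: with your selective choice of $Q$ it cannot be closed. If $Q_p=\tfrac1t J_{2t}-I_{2t}$ while $Q_q=I_{2t}$, then the $(V_p,V_q)$ block of $QA_{\H}Q$ is $Q_pM_{pq}=\tfrac{B'_{qp}}{t}J_{2t}-M_{pq}$ (using only the constant column sums from Condition~1); since the switching alters no edges between $V_p$ and $V_q$, this would have to equal $M_{pq}$, forcing $M_{pq}=\tfrac{B'_{qp}}{2t}J_{2t}$, i.e.\ the block would have to be a constant matrix. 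Condition~1 guarantees only constant row and column sums, not constant blocks, so your $Q$ is not in general a similarity from $A_{\H}$ to $A_{\H_{\rho}}$.

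The repair is precisely what the paper does: drop the case distinction and place the switching matrix $X_{2t}=\tfrac{2}{2t}J_{2t}-I_{2t}$ on \emph{every} cell $V_1,\dots,V_k$, keeping $I_{m-1}$ on $D$. This costs nothing on the $(V_p,D)$ blocks with $|N(u_1,\dots,u_{m-1})\cap V_p|\in\{0,2t\}$, because $X_{2t}$ fixes both the zero vector and the all-ones vector, and it eliminates the mixed case entirely: every $(V_p,V_q)$ block is now conjugated on both sides, where $X_{2t}M_{pq}X_{2t}=M_{pq}$ follows from the constant row and column sums together with $B'_{pq}=B'_{qp}$ (the $J$-terms contribute $\tfrac{1}{t}(B'_{qp}-B'_{pq})J_{2t}=0$). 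With that single change to the definition of $Q$, the rest of your argument goes through verbatim and coincides with the paper's proof.
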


\begin{proof}
After labeling the vertices as in the partition $\rho$ we can write the adjacency matrix of $\H$ as
$$
A_{\mathcal{H}}=\dfrac{1}{m-1}
\begin{bmatrix}
M_{11}&M_{12}\cdots&M_{1k}&D_{11}\\
M^T_{12}&M_{22}\cdots&M_{2k}&D_{21}\\
\vdots&\vdots \ddots&\vdots&\vdots\\
M^T_{2k}&M^T_{2k}\cdots&M_{kk}&D_{k1}\\
D^T_{11}&D^T_{21}\cdots&D^T_{k1}&D_0
\end{bmatrix},
$$
where for $i\in\{1,2,\dots,k\}$,~$D_{i1}=J_{2t\times{(m-1)}}$ or $D_{i1}=[X^{(i)}_1,X^{(i)}_2,\dots,X^{(i)}_{m-1}]$, where $X^{(i)}_j{(j=1,2,\dots,m-1)}$
are the same column vector consisting $t$-zeros and $t$-ones and $D_0=\alpha(J-I)$ for some $\alpha\in{\mathbb{N}}$.\\
We take $X=diag(X_{2t},X_{2t},\dots,X_{2t},I_{m-1})$ where $X_{2t}=\dfrac{2}{2t}J_{2t}-I_{2t}$. Note that for $D_{i1}=J_{2t\times{(m-1)}}$, 
we have $D^T_{i1}X_{2t}=D^T_{i1}$ and when $D_{i1}=[X^{(i)}_1,X^{(i)}_2,\dots,X^{(i)}_{m-1}]$, we have $D^T_{i1}X_{2t}=J_{(m-1)\times{2t}}-D^T_{i1}$.
Also, each $M_{ij}$ has constant row sum and column sum and thus $X_{2t}M_{ij}X_{2t}=M_{ij}$. Hence
$XA_{\mathcal{H}}X=A_{\mathcal{H}_{\rho}}$, where $X^2=I_n$ and which implies $A_{\mathcal{H}}$,~$A_{\mathcal{H}_{\rho}}$ are similar.
\end{proof}

Take a regular hypergraph $\mathcal{H}_1(V_1,E_1)$ with $V_1=\{v_1,v_2,\dots,v_{2t}\}$ and $D=\{u_1,u_2,\dots,u_{m-1}\}$. 
For any subset $V_2\subset{V_1}$ with $|V_2|=t$ we suppose $E_2=\{e_i: e_i=\{u_1,u_2,\dots,u_{m-1},v_i\}, v_i\in{V_2}\}$, $E_3=\{e'_i: e'_i=\{u_1,u_2,\dots,u_{m-1},v_j\}, v_j\in{V_1\backslash{V_2}}\}$. Now the  Proposition~\ref{prop3}, implies that
$\mathcal{H}(V_1\cup{D},E_1\cup{E_2})$ and $\mathcal{H}_{\rho}(V_1\cup{D},E_1\cup{E_3})$ are cospectral.

\begin{exm}
Let $V_0=\{1,2,\dots,8\}$, $E^{(1)}_0=\{\{1,2,3\},\{3,4,5\},\{5,6,1\},\{2,4,6\},\{7,8,3\},\{7,8,4\},$ $\{7,8,5\}\}$ and 
$E^{(2)}_0=\{\{1,2,3\},\{3,4,5\},\{5,6,1\},\{2,4,6\},\{7,8,1\},\{7,8,2\},\{7,8,6\}\}$. 
Then $\mathcal{H}_0(V_0,E^{(1)}_0)$ and $\mathcal{G}_0(V_0,E^{(1)}_0)$ are non-isomorphic cospectral 3-uniform hypergraphs.
Here we take $\mathcal{H}_1(V_1,E_1)$ as $V_1=\{1,2,3,4,5,6\}$,~$E_1=\{\{1,2,3\},\{3,4,5\},\{5,6,1\},\{2,4,6\}\}$,~$D=\{7,8\}$,~$V_2=\{3,4,5\}$.
\end{exm}

Now we have another corollary of the Theorem~\ref{th4}.

\begin{corollary}\label{cor4}
Let $k=1$ and $\mathcal{H}_0$ be $r_0$-regular. Let $spec(A_{\mathcal{H}_0})=\{\mu_1,\mu_2,\dots,\mu_n(=r_0)\}$,~
$spec(A_{\mathcal{H}_1})=\{\lambda_1,\lambda_2,\dots,\lambda_{n_1}(=r_1)\}$ and $\mathcal{H}_{\pi}=\mathcal{H}_0\circ^1{\mathcal{H}_1}$. Then the adjacency eigenvalues of $\H_{\pi}$ are given by
$r_1+(n_1-1)c$ with the multiplicity $n-1$,~$\lambda_i$ with the multiplicity $n$ for $i=1,2,\dots,n_1-1$,~$\mu_j$ with the multiplicity one for $j=1,2,\dots,n-1$ and $\alpha^{\pm}$ with the multiplicity one 
where $\alpha^{\pm}=\dfrac{1}{2}\Big[r_1+(n_1-1)c+r_0+(n-1)a\pm\sqrt{\{r_1-r_0+(n_1-1)c-(n-1)a\}^2+4b^2nn_1}\Big]$.

\end{corollary}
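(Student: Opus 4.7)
The plan is to specialize Theorem~\ref{th4} to $k=1$ (so $p=n$) and factor each of the two determinants in~(\ref{Eq3}) completely, exploiting the fact that both $\mathcal{H}_0$ and $\mathcal{H}_1$ are regular. Regularity means that the all-ones vectors are Perron eigenvectors of $A_{\mathcal{H}_0}$ and $A_{\mathcal{H}_1}$, and the remaining eigenvectors can be chosen orthogonal to them; this lets us diagonalize each of the two matrices appearing in (\ref{Eq3}) by splitting $\mathbb{R}^{n_1}$ and $\mathbb{R}^n$ along the Perron/non-Perron decomposition.

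First I would evaluate $\det(D_1-xI_{n_1})^n$. Since $(J_{n_1}-I_{n_1})f=-f$ for every $f$ perpendicular to $\mathbf{1}_{n_1}$, the matrix $D_1=A_{\mathcal{H}_1}+c(J_{n_1}-I_{n_1})$ has eigenvalue $\lambda_j-c$ on each non-Perron eigenvector of $A_{\mathcal{H}_1}$ and $r_1+(n_1-1)c$ on $\mathbf{1}_{n_1}$. Raising to the $n$th power then accounts for $n$ copies of each of these values.

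Second, I would expand $\det\!\bigl(A_{\mathcal{H}_0}+\alpha J_n-\beta I_n\bigr)$, with $\alpha=a-\frac{b^2 n n_1}{r_1+(n_1-1)c-x}$ and $\beta=a+x$. On the orthogonal complement of $\mathbf{1}_n$ the $J_n$ term vanishes, so each non-Perron eigenvalue $\mu_i$ of $A_{\mathcal{H}_0}$ contributes a linear factor $(\mu_i-a-x)$. On $\mathbf{1}_n$ the surviving scalar factor is $r_0+n\alpha-\beta$; clearing the denominator $r_1+(n_1-1)c-x$ turns it into a quadratic in $x$ whose two roots are the values $\alpha^{\pm}$ stated in the corollary.

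Finally I would combine the two factors to obtain $f_{\mathcal{H}_\pi}(x)$ in closed form. The key observation is that the denominator $r_1+(n_1-1)c-x$ introduced while clearing cancels exactly one of the $n$ copies of this factor coming from $\det(D_1-xI_{n_1})^n$, lowering the multiplicity of the eigenvalue $r_1+(n_1-1)c$ from $n$ to $n-1$. This single-power cancellation is the main bookkeeping hazard; I would cross-check it by confirming that the total degree of the assembled polynomial equals $|V(\mathcal{H}_\pi)|=n+nn_1$, and that the quadratic factor from the Perron pair does not degenerate to a linear one, so that both $\alpha^{+}$ and $\alpha^{-}$ genuinely appear with multiplicity one.
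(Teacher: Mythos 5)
Your proposal is correct and follows essentially the same route as the paper: specialize (\ref{Eq3}) to $k=1$, $p=n$, split both determinants along the Perron/non-Perron decomposition of the regular hypergraphs, and cancel exactly one factor of $r_1+(n_1-1)c-x$ against the $n$-th power of $\det(D_1-xI_{n_1})$; your degree check $n+nn_1$ confirms the bookkeeping. Note only that the factors you (correctly) obtain are $(\lambda_j-c-x)$ and $(\mu_i-a-x)$, so the eigenvalues are the shifted values $\lambda_j-c$ and $\mu_i-a$ rather than $\lambda_j$ and $\mu_i$ as literally written in the corollary's statement.
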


\begin{proof}
For $p=n,k=1$, we have
\begin{align*}
&A_{\H_0}+I_k\otimes{\Bigg((a-\dfrac{b^2nn_1}{r_1+(n_1-1)c-x})J_p-(a+x)I_p\Bigg)}=:Y \\
&=A_{\H_0}+\Bigg(a-\dfrac{b^2nn_1}{r_1+(n_1-1)c-x}\Bigg)J_n-(a+x)I_n.
\end{align*}
Now $spec(Y)=\{\mu_i-a-x:i=1,2,\dots,n-1\}\cup{\{\mu\}}$, where 
\begin{align*}
\mu&=r_0+n\bigg(a-\dfrac{b^2nn_1}{r_1+(n_1-1)c-x}\bigg)-(a+x)\\
&=r_0+(n-1)a-\dfrac{b^2nn_1}{r_1+(n_1-1)c-x}-x\\
&=\dfrac{x^2-\{r_1+(n_1-1)c+r_0+(n-1)a\}x+(r_1+(n_1-1)c)(r_0+(n-1)a)-b^2nn_1}{r_1+(n_1-1)c-x}\\
&=\dfrac{(x-\alpha^{+})(x-{\alpha}^{-})}{r_1+(n_1-1)c-x}.
\end{align*}
Therefore, 
\begin{align}
det(Y)=\dfrac{(x-\alpha^{+})(x-{\alpha}^{-})}{r_1+(n_1-1)c-x}\prod\limits_{i=1}^{n-1}(\mu_i-a-x).
\end{align}
We also have 
\begin{align}
det(D_1-xI_{n_1})=\{r_1+(n_1-1)c-x\}\prod\limits_{i=1}^{n_1-1}(\lambda_i-c-x).
\end{align}
From the Equation $(\ref{Eq3})$ we get 
\begin{align*}
f_{\mathcal{H}_{\pi}}(x)=\{det(D_1-xI_{n_1})\}^ndet(Y).
\end{align*}
This completes the proof.

\end{proof}

\subsection{Edge Corona}
Let $\H_0(V_0,E_0)$ be an $m$-uniform hypergraph with the edge set $E_0=\{e_1,e_2,\dots,e_k\}$ and $|V_0|=n_0$. Also let $\H_i(V_i,E_i)$~$(i=1,2,\dots,k)$ be 
$m$-uniform hypergraphs. For each $i=1,2,\dots,k$ we consider $e_i\oplus\H_i$. The new hypergraph is known as the \textit{edge corona} of 
hypergraphs and we write it by $\H=\H_0\square^k\H_i$. 
When $|V_i|=n_1$ for all $i=1,\dots,k$, we write $D_i=A_{\mathcal{H}_i}+c(J_{n_1}-I_{n_1})$, take $D=diag(D_1,D_2,\dots,D_k)$,~$R=$ vertex-edge incidence 
matrix, for $\mathcal{H}_0$,~$1_{n_1}=[1,1,\dots,1]$ row vector of length $n_1$,~$a=\binom{m+n_1-2}{m-2}-1$,
~$b=\dfrac{1}{m-1}\binom{n+n_1-2}{m-2}$,~$c=\binom{m+n_1-2}{m-2}-\binom{n_1-2}{m-2}$.
Now using Lemma~\ref{lemma2} we have the following theorem

\begin{theorem}\label{th5}
Let $\mathcal{H}_0(V_0,E_0)$ be an $m$-uniform hypergraph with $|V_0|=n$,~$|E_0|=k$. Let $\{\mathcal{H}_i(V_i,E_i):1\leq{i}\leq{k},|V_i|=n_1\}$
be a set of $m$-uniform hypergraphs. Then the the characteristic polynomial of $A_{\mathcal{H}}$ for the edge corona $\mathcal{H}=\mathcal{H}_0\square^k{\mathcal{H}_i}$
is as follows
\begin{align}\label{Eq11}
f_{\mathcal{H}}(x)=det(D-xI_{kn_1})det(\{(a+1)A_{\mathcal{H}_0}-xI_n-b^2(R\otimes{1_{n_1}})(D-xI_{kn_1})^{-1}(R^T\otimes{1^T_{n_1}})\}).
\end{align}
\end{theorem}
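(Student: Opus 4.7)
\medskip

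\noindent\textbf{Proof plan for Theorem \ref{th5}.} The strategy is to block-decompose $A_{\mathcal{H}}$ along the split $V=V_0\sqcup\bigl(\bigcup_{i=1}^k V_i\bigr)$ and then apply the Schur complement identity (Lemma~\ref{lemma2}) to the $(kn_1)\times(kn_1)$ block $D-xI_{kn_1}$. The whole theorem is a bookkeeping calculation plus one application of Schur complement, so there is no genuinely hard step; the only thing that requires care is the counting of new edges generated by each $e_i\oplus\mathcal{H}_i$.

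\medskip

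\noindent First I would compute the four blocks of $A_{\mathcal{H}}$. Fix $u,v\in V_0$ with $u\ne v$; each edge $e_i\in E_0$ containing both $u$ and $v$ creates new $m$-edges in $e_i\oplus\mathcal{H}_i$ that meet $V_i$ and contain $\{u,v\}$. A short count gives $\binom{m+n_1-2}{m-2}-1$ such edges per $e_i$. Dividing by $m-1$ and summing over all $e_i\ni\{u,v\}$ yields exactly $a\cdot(A_{\mathcal{H}_0})_{uv}$, so the $V_0\times V_0$ block is $(a+1)A_{\mathcal{H}_0}$. For $u\in V_0$, $v$ in the $i$-th copy $V_i$, the entry is nonzero only when $u\in e_i$; counting $m$-sets through $\{u,v\}$ inside $e_i\cup V_i$ gives the constant $b$, and the pattern ``$R_{ui}=1$ iff $u\in e_i$'' collapses to the matrix $b(R\otimes 1_{n_1})$. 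For $u,v$ in the same $V_i$ one similarly gets $A_{\mathcal{H}_i}+c(J_{n_1}-I_{n_1})=D_i$, whereas for $u,v$ in distinct $V_i,V_j$ the entry vanishes (no $e_i\oplus\mathcal{H}_i$ operation has ever linked them). This produces
\begin{equation*}
A_{\mathcal{H}}=\begin{bmatrix}(a+1)A_{\mathcal{H}_0} & b\,(R\otimes 1_{n_1})\\[2pt] b\,(R^{T}\otimes 1_{n_1}^{T}) & D\end{bmatrix}.
\end{equation*}

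\medskip

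\noindent Next, I would apply Lemma~\ref{lemma2} to $A_{\mathcal{H}}-xI$ with $A_{22}=D-xI_{kn_1}$ in the lower-right corner. For generic $x$, $D-xI_{kn_1}$ is invertible, and the Schur-complement formula yields immediately
\begin{equation*}
f_{\mathcal{H}}(x)=\det(D-xI_{kn_1})\,\det\!\Bigl[(a+1)A_{\mathcal{H}_0}-xI_n-b^{2}(R\otimes 1_{n_1})(D-xI_{kn_1})^{-1}(R^{T}\otimes 1_{n_1}^{T})\Bigr],
\end{equation*}
which is \eqref{Eq11}. Finally, since both sides are polynomials in $x$ agreeing off a finite set, the identity extends to all $x$ by continuity, removing the genericity assumption.

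\medskip

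\noindent The only potential pitfall is the edge-counting step: one must be sure that, in each $e_i\oplus\mathcal{H}_i$, the only $m$-set with vertices entirely in $V_0$ that is \emph{not} a new edge is $e_i$ itself (contributing the ``$-1$'' inside $a$), and that the $m$-sets with all vertices in $V_i$ which are not new edges form a $\binom{n_1-2}{m-2}$ count (contributing to $c$). Once these are pinned down, the block structure reads off automatically and Schur complement finishes the proof.
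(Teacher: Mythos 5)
Your proposal is correct and follows exactly the route the paper intends: write $A_{\mathcal{H}}$ in block form with diagonal blocks $(a+1)A_{\mathcal{H}_0}$ and $D$, off-diagonal blocks $b(R\otimes 1_{n_1})$, and apply the Schur complement lemma to the $(kn_1)\times(kn_1)$ corner. Your explicit edge counts (in particular that the $m$-sets through $\{u,v\}$ live inside $e_i\cup V_i$, so the count for the off-diagonal block is $\tbinom{m+n_1-2}{m-2}$) are in fact more careful than the paper, which states the theorem with no written proof beyond citing the lemma.
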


\begin{corollary}\label{cor5}
Let $\mathcal{H}_i$'s be $r_1$-regular hypergraphs and $spec(A_{\mathcal{H}_i})=\{\lambda^{(1)}_i,\lambda^{(2)}_i,\dots,\lambda^{(n_1)}_i(=r_1)\}$. 
Then the characteristic polynomial of $\H$ can be given by
\begin{align}\label{Eq12}
f_{\mathcal{H}}(x)&=\{r_1+(n_1-1)c-x\}^{k-n}det\bigg(\beta_1(x)A_{\mathcal{H}_0}-b^2n_1D_d+\beta_2(x)I_n\bigg)\prod\limits_{i=1}^k\prod\limits_{j=1}^{n_1-1}(\lambda^{(j)}_i-c-x),
\end{align} where $D_d=diag(d_1,d_2,\dots,d_n)$ where $d_i$ denote the degree of vertices of $A_{\H_0}$,~$\beta_1(x)=(a+1)\{r_1+(n_1-1)c-x\}-(m-1)b^2n_1$,~
 $\beta_2(x)=x\{x-r_1-(n_1-1)c\}$.
\end{corollary}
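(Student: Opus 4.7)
The plan is to apply Theorem~\ref{th5} and evaluate both factors on the right-hand side of (\ref{Eq11}) under the regularity assumption. The key observation throughout is that, because each $\mathcal{H}_i$ is $r_1$-regular, the all-ones vector $1_{n_1}^T$ is a common eigenvector of $A_{\mathcal{H}_i}$ and $J_{n_1}-I_{n_1}$, hence of $D_i=A_{\mathcal{H}_i}+c(J_{n_1}-I_{n_1})$, with eigenvalue $r_1+(n_1-1)c$. The remaining eigenvectors of $A_{\mathcal{H}_i}$ are orthogonal to $1_{n_1}^T$ and so are eigenvectors of $D_i$ with eigenvalues $\lambda^{(j)}_i-c$ for $j=1,\dots,n_1-1$. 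This immediately gives the first factor:
\[
\det(D-xI_{kn_1})=\prod_{i=1}^{k}\det(D_i-xI_{n_1})=\bigl(r_1+(n_1-1)c-x\bigr)^{k}\prod_{i=1}^{k}\prod_{j=1}^{n_1-1}(\lambda^{(j)}_i-c-x).
\]

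Next I would handle the more interesting second factor by simplifying the term $(R\otimes 1_{n_1})(D-xI_{kn_1})^{-1}(R^T\otimes 1_{n_1}^T)$. Each column of $R^T\otimes 1_{n_1}^T$ is a stack of scalar multiples of $1_{n_1}^T$, and $(D-xI_{kn_1})^{-1}$ is block diagonal with blocks $(D_i-xI_{n_1})^{-1}$; since $1_{n_1}^T$ is an eigenvector of $D_i$ with eigenvalue $r_1+(n_1-1)c$, it follows that
\[
(D-xI_{kn_1})^{-1}(R^T\otimes 1_{n_1}^T)=\frac{1}{r_1+(n_1-1)c-x}(R^T\otimes 1_{n_1}^T).
\]
Using the Kronecker identity $(A\otimes B)(C\otimes D)=AC\otimes BD$ together with $1_{n_1}1_{n_1}^T=n_1$, one gets $(R\otimes 1_{n_1})(R^T\otimes 1_{n_1}^T)=n_1\,RR^T$. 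Finally, since $\mathcal{H}_0$ is $m$-uniform with $(A_{\mathcal{H}_0})_{ij}=d_{ij}/(m-1)$ for $i\neq j$ and $(RR^T)_{ij}$ equals the codegree of $i,j$ off the diagonal and $d_i$ on the diagonal, we can identify $RR^T=(m-1)A_{\mathcal{H}_0}+D_d$.

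Plugging these computations back into (\ref{Eq11}), the inner determinant becomes
\[
\det\!\Bigl\{(a+1)A_{\mathcal{H}_0}-xI_n-\frac{b^2n_1}{r_1+(n_1-1)c-x}\bigl[(m-1)A_{\mathcal{H}_0}+D_d\bigr]\Bigr\}.
\]
Factoring $r_1+(n_1-1)c-x$ out of each of the $n$ rows introduces a prefactor $(r_1+(n_1-1)c-x)^{-n}$ and leaves $\det\{\beta_1(x)A_{\mathcal{H}_0}-b^2n_1D_d+\beta_2(x)I_n\}$ with the stated $\beta_1,\beta_2$. Combining with the first factor, the powers of $r_1+(n_1-1)c-x$ combine to $(r_1+(n_1-1)c-x)^{k-n}$, yielding (\ref{Eq12}).

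The step that I expect to take most care is the evaluation of $(D-xI_{kn_1})^{-1}(R^T\otimes 1_{n_1}^T)$: one must be precise about the role of the row vector $1_{n_1}$ versus the column $1_{n_1}^T$, so that the block-diagonal inverse acts correctly on columns of the form $r_{ij}\cdot 1_{n_1}^T$ and the Kronecker identities apply cleanly. Once that reduction to $n_1\,RR^T$ is in place, the remaining manipulations are bookkeeping.
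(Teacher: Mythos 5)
Your proposal is correct and follows essentially the same route as the paper's proof: both rest on the observation that $1_{n_1}^T$ is an eigenvector of each $D_i$ with eigenvalue $r_1+(n_1-1)c$ (so that $(D-xI_{kn_1})^{-1}(R^T\otimes 1_{n_1}^T)=\{r_1+(n_1-1)c-x\}^{-1}(R^T\otimes 1_{n_1}^T)$), together with the identity $RR^T=D_d+(m-1)A_{\mathcal{H}_0}$, after which clearing the denominator yields $\beta_1,\beta_2$ and the power $k-n$. Your version merely spells out a few of the intermediate Kronecker-product steps that the paper leaves implicit.
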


\begin{proof}
Note that $(D-xI_{kn_1})(R^t\otimes{1^t_{n_1}})=(r_1+(n_1-1)c-x)(R^t\otimes{1^t_{n_1}})$ and $RR^T=D_d+(m-1)A_{\mathcal{H}_0}$. 
Therefore $$(R\otimes{1_{n_1}})(D-xI_{kn_1})^{-1}(R^T\otimes{1^T_{n_1}})=\dfrac{n_1}{r_1+(n_1-1)c-x}[D_d+(m-1)A_{\mathcal{H}_0}].$$\\ 
Then we have
\begin{align}\label{Eq13}
det\bigg((a+1)A_{\mathcal{H}_0}-xI_n-b^2(R\otimes{1_{n_1}})(D-xI_{kn_1})^{-1}(R^T\otimes{1^T_{n_1}})\bigg)
&=\dfrac{det\bigg(\beta_1(x)A_{\mathcal{H}_0}-b^2n_1D_d+\beta_2(x)I_n\bigg)}{\{r_1+(n_1-1)c-x\}^n}.
\end{align}
Again 
\begin{align}\label{Eq14}
det(D-xI_{kn_1})&=\prod\limits_{i=1}^k{det(D_i-xI_{n_1})}  \notag\\
&=\Big\{\prod\limits_{i=1}^k\prod\limits_{j=1}^{n_1-1}(\lambda^{(j)}_i-c-x)\Big\}\{r_1+(n_1-1)c-x\}^k.
\end{align}
Using the Equations~(\ref{Eq13}) and (\ref{Eq14}) in the Equation~(\ref{Eq11}) we have the Equation (\ref{Eq12}).
\end{proof}

\begin{corollary}\label{cor6}
Let $\mathcal{H}_i$'s be the hypergraphs mentioned in the above corollary and $\mathcal{H}_0$ be $r$-regular with $spec(A_{\mathcal{H}_0})=\{\mu_1,\mu_2,\dots,\mu_n(=r)\}$.
Then the adjacency eigenvalues of $\H$ are
$r_1+(n_1-1)c$ with the multiplicity $k-n$,~$\lambda^{(j)}_i$ with the multiplicity one, for all $i=1,2,\dots,k$,~$j=1,2,\dots,n_1-1$ and $\beta^{\pm}_{j}$ with the multiplicity one for $j=1,2,\dots,n$, where ${\beta_j}^{\pm}=\dfrac{1}{2}\Big[r_1+(n_1-1)c+(a+1)\mu_j\pm\sqrt{\{r_1+(n_1-1)c-(a+1)\mu_j\}^2+4b^2n_1\{(m-1)\mu_j+r\}}\Big]$. 
 
\end{corollary}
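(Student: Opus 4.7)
The plan is to specialize the characteristic polynomial from Corollary~\ref{cor5} to the $r$-regular case and then factor the central determinant using the spectral decomposition of $A_{\mathcal{H}_0}$. Since $\mathcal{H}_0$ is $r$-regular, the degree matrix satisfies $D_d = r I_n$, so the matrix inside the determinant on the right-hand side of (\ref{Eq12}) reduces to
\begin{equation*}
\beta_1(x) A_{\mathcal{H}_0} + \bigl(\beta_2(x) - b^2 n_1 r\bigr) I_n.
\end{equation*}
Because $A_{\mathcal{H}_0}$ is real symmetric with eigenvalues $\mu_1,\ldots,\mu_n$, the determinant equals $\prod_{j=1}^{n}\bigl(\beta_1(x)\mu_j + \beta_2(x) - b^2 n_1 r\bigr)$. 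Each factor is a polynomial in $x$ which I plan to rewrite as a quadratic.

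Next I would expand one such factor. Writing $\alpha := r_1+(n_1-1)c$ and substituting $\beta_1(x)=(a+1)(\alpha-x)-(m-1)b^2 n_1$ and $\beta_2(x)=x(x-\alpha)$, the factor becomes
\begin{equation*}
(x-\alpha)\bigl[x-(a+1)\mu_j\bigr] - b^2 n_1\bigl[(m-1)\mu_j + r\bigr],
\end{equation*}
which expands to $x^2 - [\alpha+(a+1)\mu_j]x + \alpha(a+1)\mu_j - b^2 n_1[(m-1)\mu_j+r]$. Completing the square (or using the quadratic formula) gives the two roots $\beta_j^{\pm}$ exactly as stated, since the discriminant equals $[\alpha-(a+1)\mu_j]^2 + 4 b^2 n_1[(m-1)\mu_j+r]$.

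Finally I would assemble the full factorization of $f_{\mathcal{H}}(x)$. Combining the three pieces of (\ref{Eq12}) produces the eigenvalues: $r_1+(n_1-1)c$ with multiplicity $k-n$ from the prefactor $\{r_1+(n_1-1)c-x\}^{k-n}$; the values $\lambda^{(j)}_i-c$ with multiplicity one for each $i=1,\ldots,k$ and $j=1,\ldots,n_1-1$ from $\prod\prod(\lambda^{(j)}_i-c-x)$; and the pairs $\beta_j^{\pm}$ from the $n$ quadratics above. As a sanity check, the total count $(k-n)+k(n_1-1)+2n = n+kn_1$ agrees with $|V(\mathcal{H})|$, confirming we have found the full spectrum.

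The main obstacle is purely algebraic: carrying out the expansion of $\beta_1(x)\mu_j+\beta_2(x)-b^2 n_1 r$ carefully so that the cross-terms cancel into the clean factored quadratic $(x-\alpha)[x-(a+1)\mu_j] - b^2 n_1[(m-1)\mu_j+r]$, and then recognizing the discriminant in the form claimed for $\beta_j^{\pm}$. A secondary subtlety is bookkeeping: the Perron eigenvalue $r_1$ of each $\mathcal{H}_i$ has already been separated out in the definition of $D_i$ and contributes the factor $(r_1+(n_1-1)c-x)^k$, of which $n$ copies are consumed by the denominator in (\ref{Eq13}) (leaving $k-n$), so the exponents must be tracked carefully to match the stated multiplicities.
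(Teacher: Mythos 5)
Your proposal is correct and follows essentially the same route as the paper: set $D_d=rI_n$, factor the determinant in (\ref{Eq12}) over the eigenvalues $\mu_j$ of $A_{\mathcal{H}_0}$, and observe that each factor $\beta_1(x)\mu_j+\beta_2(x)-b^2n_1r$ is exactly the quadratic $(x-\beta_j^{+})(x-\beta_j^{-})$. One small point in your favour: your listing of the eigenvalues $\lambda^{(j)}_i-c$ is the one consistent with Corollary~\ref{cor5}, whereas the statement's ``$\lambda^{(j)}_i$'' appears to be a typo.
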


\begin{proof}
Here we have, $D_d=rI_n$. Thus
\begin{align*}
det\bigg(\beta_1(x)A_{\H_0}-b^2n_1D_d+\beta_2(x)I_n\bigg)&=det(\{\beta_1(x)A_{\H_0}+(\beta_2(x)-b^2rn_1)I_n\}) \notag \\
&=\prod\limits_{j=1}^n\{\beta_1(x)\mu_j+\beta_2(x)-b^2rn_1\}.
\end{align*}
Again $\beta_1(x)\mu_j+\beta_2(x)-b^2rn_1=(x-{\beta_j}^{+})(x-{\beta_j}^{-})$ and hence the result follows.
\end{proof}

\subsubsection{Loose Cycles and Loose Paths}
Recently loose cycles and loose paths attract  interest of many researchers (\cite{Qi2014}, \cite{Luke2019}). In \cite{Luke2019}
the authers consider oriented hypergraphs and they aim to find the spectra of loose cycles and loose paths, where the concept of hypergraph duality is used. They consider $-(m-1)A_{\H}$ as the adjacency matrix of an $m$-uniform hypergraph $\H$. The adjacency eigenvalues of $C^{(2s)}_{L(s,n)}$ and $C^{(s+1)}_{L(s,s+1)}$ have been derived. They also poses the question: ``What are the adjacency eigenvalues of $C^{(m)}_{L(s;n)}$ other than the above two cases ?''.
Thus here we are ineterested to find the eigenvalues of our adjacency matrix of $C^{(m)}_{L(s;n)}$ when $m\geq{2s}$.

\begin{theorem}\label{th6}
The adjacency eigenvalues of an $s$-loose cycle $C^{(m)}_{L(s;n)}$, are
\begin{enumerate}
\item $\dfrac{-2}{2s-1}$ with the multiplicity $n(s-1)$ and 
$\dfrac{2}{2s-1}(s-1+s\cos{\dfrac{2\pi i}{n}})$ with the multiplicity one, for $i=1,2,\dots,n$, when $m=2s$ and
\item \label{case2} 
$\dfrac{-1}{m-1}$ with the multiplicity atleast $n(m-2s-1)$,
$\dfrac{-2}{m-1}$ with the multiplicity atleast $n(s-1)$ and
$\gamma^{+}_i$,~$\gamma^{-}_i$ with the multiplicity atleast one, where, $$\gamma_i^{\pm}=\dfrac{1}{2}\Bigg[m-3+2s\cos{\dfrac{2\pi{i}}{n}} \pm\sqrt{(m-3+2s\cos{\dfrac{2\pi{i}}{n}})^2+8(m-s-1+s\cos{\dfrac{2\pi{i}}{n}})}\Bigg],$$ for $i=1,2,\dots,n$, when $m\geq{2s+1}$.

\end{enumerate}
\end{theorem}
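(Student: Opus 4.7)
The plan is to split the spectrum of $A_{C^{(m)}_{L(s;n)}}$ into a \emph{local} part, coming from eigenvectors supported on small subsets of vertices, and a \emph{global} part, obtained via an equitable partition that respects the cyclic symmetry of the hypergraph. Throughout, label the edges as $e_0,e_1,\dots,e_{n-1}$ cyclically, let $J_i=e_i\cap e_{i+1}$ be the $s$-element ``junction'' between consecutive edges, and let $I_i=e_i\setminus(e_{i-1}\cup e_{i+1})$ be the $(m-2s)$-element set of ``inner'' vertices of $e_i$. When $m\geq 2s+1$ the cells $I_i$ are nonempty, and $\pi=\{I_0,\dots,I_{n-1},J_0,\dots,J_{n-1}\}$ partitions $V$ into $2n$ cells.

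First I would produce the local eigenvectors. Any vector $f$ supported on $I_i$ with $\sum_{v\in I_i}f(v)=0$ gives $(A_\H f)(v)=\frac{1}{m-1}\sum_{u\in e_i\setminus\{v\}}f(u)=-\frac{1}{m-1}f(v)$ on $I_i$ and vanishes elsewhere, yielding $m-2s-1$ independent eigenvectors of eigenvalue $-\tfrac{1}{m-1}$ per edge, and hence multiplicity at least $n(m-2s-1)$. Similarly, a vector supported on $J_i$ summing to zero gives eigenvalue $-\tfrac{2}{m-1}$: the codegree of any two distinct vertices of $J_i$ equals $2$ (they lie in both $e_i$ and $e_{i+1}$), which produces the extra factor of $2$. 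This contributes multiplicity at least $n(s-1)$.

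For the remaining $2n$ eigenvalues I would analyze the quotient matrix $B$ of $\pi$. A direct count of cross-cell valencies produces only five non-zero kinds of entries, and, once the cells are grouped as $(I_i,J_i)$ for $i=0,\dots,n-1$, $B$ becomes block-circulant with three $2\times 2$ blocks $B^{(-1)},B^{(0)},B^{(1)}$. The discrete Fourier transform on $\mathbb{Z}_n$ block-diagonalizes $B$ into $n$ fibers
\[
M(\omega_i)=B^{(0)}+\omega_i B^{(1)}+\omega_i^{-1}B^{(-1)}, \qquad \omega_i=e^{2\pi\mathrm i/n},
\]
each a $2\times 2$ matrix whose two eigenvalues are $\gamma^{\pm}_i$. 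Computing $\mathrm{tr}\,M(\omega_i)$ and $\det M(\omega_i)$ (and using $(1+\omega_i)(1+\omega_i^{-1})=2+2\cos(2\pi i/n)$) yields the quadratic whose roots are the stated $\gamma^{\pm}_i$. Together with the $n(m-2s-1)+n(s-1)=n(m-s-2)$ local eigenvalues, this accounts for all $n(m-s)$ eigenvalues of $A_\H$.

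The main obstacle is the determinant calculation for $M(\omega_i)$: the trace is immediate, but the determinant mixes the $\omega_i$ and $\omega_i^{-1}$ cross terms with the diagonal block, and careful bookkeeping is required to reduce it to the clean radicand $(m-3+2s\cos\theta)^2+8(m-s-1+s\cos\theta)$. The case $m=2s$ is handled by the same strategy after dropping the now-empty cells $I_i$: $\pi$ reduces to $\{J_0,\dots,J_{n-1}\}$, the quotient becomes a scalar circulant whose Fourier eigenvalues are $\tfrac{2}{2s-1}(s-1+s\cos(2\pi i/n))$, while the $J_i$-supported local eigenvectors continue to supply multiplicity $n(s-1)$ at $-\tfrac{2}{2s-1}$.
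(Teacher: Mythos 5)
Your proposal is correct and matches the paper's strategy: the same decomposition into sum-zero eigenvectors supported on the inner sets and on the junctions (giving $-\tfrac{1}{m-1}$ and $-\tfrac{2}{m-1}$ with the stated multiplicities, which the paper obtains via Corollary~\ref{cor2} applied to the weighted graph with $A_{\H}=\tfrac{1}{m-1}A_G$), together with the same $2n$-cell equitable partition $\{I_i\}\cup\{J_i\}$ whose quotient supplies the $\gamma_i^{\pm}$. The only real difference is how that quotient is diagonalized --- the paper uses the Schur complement (Lemma~\ref{lemma2}) and the identity $(I_n+Y)(I_n+Y^{t})=2I_n+A_{C_n}$ to reduce to a polynomial in $A_{C_n}$ and then substitutes $2\cos\tfrac{2\pi i}{n}$, whereas you block-diagonalize the block-circulant quotient by the discrete Fourier transform --- and your fiber data check out: $\operatorname{tr}\big((m-1)M(\omega_i)\big)=m-3+2s\cos\tfrac{2\pi i}{n}$ and $\det\big((m-1)M(\omega_i)\big)=-2\big(m-s-1+s\cos\tfrac{2\pi i}{n}\big)$, which are exactly the sum and product of the stated $\gamma_i^{\pm}$.
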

\begin{proof}
\hspace{-0.05cm}
\begin{enumerate}
\item \textbf{Case $m=2s$:} Let $\H=C^{(2s)}_{L(s;n)}$.
In the Corollary~\ref{cor2} take $G_b=C_n$, the cycle graph over $n$ vertices, and $\H_i=K_{s}$, the complete graph with $s$-vertices with each edge weight two. Then the resultant hypergraph is a graph, $G$(say). Hence $A_{\H}=\dfrac{1}{2s-1}A_G$. Thus $\dfrac{-2}{2s-1}$ is an eigenvalue of $A_{\H}$ with the multiplicity atleast $n(s-1)$. The quotient matrix is $B=\dfrac{1}{2s-1}\{sA_{C_n}+2(s-1)I_n\}$. The remaining eigenvalues of $A_{\H}$ are $\dfrac{2}{2s-1}(s-1+s\cos{\dfrac{2\pi i}{n}})$ for $i=1,2,\dots,n$.
\item \textbf{Case $m\geq 2s+1$:}
Let $G_b=C_n\square^{n}K_1$ and $\H=C^{(m)}_{L(s;n)}$. We take the vertices of $G_b$ as $V(C_n)=\{1,2,\dots,n\}$ and $V(G_b)\backslash{V(C_n)}=\{n+1,n+2,\dots,2n\}$.
For $i=1,2,\dots,n$, we take $G_i=K_s$, the complete graph with $s$ vertices with edge weight $1$, and for $i=n+1,\dots,2n$, take $G_i=K_{m-2s}$
with edge weight 2. Considering $G_B$ as backbone graph with each edge weight one and $G_i$'s as participants, we get a graph $G$ (say). Then $A_{\H}=\dfrac{1}{m-1}A_{G}$. Now using the Corollary~\ref{cor2} we get the eigenvalues of $A_{G}$ which are -1 with the  multiplicity atleast
$n(m-2s-1)$ and $-2$ with multiplicity $n(s-1)$.

Next using Equation~(\ref{Qmatrix}) we have the remaining $2n$ eigenvalues are the eigenvalues of the quotient matrix
$B$ given by
$$
(B)_{pq}=\dfrac{1}{m-1}
\begin{cases}
r_q &\text{if $p=q$},\\
n_q &\text{if $p\sim{q}$ in $G_B$,}\\
0 &\text{otherwise,}
\end{cases}
$$
where $r_q=2s-2, n_q=s$ for $q=1,2,\dots,n$ and $r_q=m-2s-1, n_q=m-2s$ for $q=n+1,\dots,2n$.
We write 
$$
B=\dfrac{1}{m-1}
\begin{bmatrix}
(2s-2)I_n+sA_{C_n} &t(I_n+Y)\\
s(I_n+Y^t) &(t-1)I_n
\end{bmatrix},
$$
where $t=m-2s$,~$A_{C_n}$ is the adjacency matrix of an $n$-cycle $C_n$ and $Y$ is the $n\times{n}$ circulant matrix with the first row $[0,0,\dots,1]$. 
We suppose $B=\dfrac{1}{m-1}B'$. Then using Lemma~\ref{lemma2} and the fact $(I_n+Y)(I_n+Y^t)=2I_n+A_{C_n}$ we have the cahracteristic polynomial 
of $B'$, as follows 
\begin{align}\label{eq_22}
f_{B'}(x)&=det(B'-xI_n) \notag \\
&=det(\{(t-1-x)I_n\})det(\{sA_{C_n}+(2s-2-x)I_n-\dfrac{st}{t-1-x}(I_n+Y)(I_n+Y^t)\}) \notag \\
&=det(\{x^2-(2s+t-3)x-(2s+2t-2)\}I_n-(s+sx)A_{C_n}).
\end{align} 
The eigenvalues of $A_{C_n}$ are $\mu_i=2\cos{\dfrac{2\pi{i}}{n}}$,~$i=1,2\dots,n$. Thus from the  Equation~(\ref{eq_22}) we have
\begin{align}
f_{B'}(x)&=\prod\limits_{i=1}^{n}\{x^2-(2s+t-3+s\mu_i)x-(2s+2t-2+s\mu_i)\} \notag \\
&=\prod\limits_{i=1}^{n}(x-\gamma_i^{+})(x-\gamma_i^{-}).
\end{align}
\end{enumerate}
\end{proof}
In the second case of the Theorem~\ref{th6}, if we take $s=1$, we get the adjacency eigenvalues of loose cycle $C^m_{L(1;n)}$ and which are $\dfrac{-1}{m-1}$ with the  multiplicity $n(m-3)$ and
$$\dfrac{1}{2}\Bigg[m-3+2\cos{\dfrac{2\pi{i}}{n}} \pm\sqrt{(m-3+2\cos{\dfrac{2\pi{i}}{n}})^2+8(m-2+\cos{\dfrac{2\pi{i}}{n}})}\Bigg]$$ 
with the multiplicity one for $i=1,2,\dots,n$.
We can also prove this result by using edge corona. For that, let $\H=C^m_{L(1;n)}$ and $G=C_n\square K_{m-2}$. Then $A_{\H}=\dfrac{1}{m-1}A_G$. We know that  
$spec(C_n)=\{\mu_i=2\cos{\dfrac{2\pi{i}}{n}}:i=1,2\dots,n\}$. We also have
$spec(K_{m-2})=\{m-3,-1,-1,\dots,-1\}$. Now using the Corollary~\ref{cor5} for graphs we have the eigenvalues of $A_{G}$ and hence the eigenvalues of $A_{\H}$ which are the same as given above.

First part of the Theorem~\ref{th6} provides an alternative prove of the part $(1)$ of the Theorem 3.6 in \cite{Luke2019}. Now we pose the following question.
\begin{Question}
	What are the adjacency eigenvalues of   $C^m_{L(s;n)}$ for $m\leq 2s-1$?
\end{Question}

\begin{lemma}\label{Eq16}
	For a square matrix $A$ we have
	\begin{align}
	det(A+\sum\limits_{i=1,n}u_{ii}E_{ii})=det(A)+\sum\limits_{i=1,n}u_{ii}det(A(i|i))+u_{11}u_{nn}det(A(1,n|1,n)),
	\end{align}
	where $A(i|j)$ is the matrix obtained from $A$ by deleting the $i$-th row and $j$-th column, respectively, and $E_{i,j}$ is the matrix with $1$ in $(i,j)$-th position and zero elsewhere.
\end{lemma}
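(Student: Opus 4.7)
The plan is to use multilinearity of the determinant in the rows of $A$. Since $u_{11}E_{11}$ only affects the first row and $u_{nn}E_{nn}$ only affects the last row of $A$, the matrix $A+u_{11}E_{11}+u_{nn}E_{nn}$ has the same rows as $A$ except that the first row is replaced by $(\text{row}_1 A)+u_{11}e_1^T$ and the last row by $(\text{row}_n A)+u_{nn}e_n^T$, where $e_1,e_n$ are the standard basis vectors. Splitting each of these two rows and applying row-multilinearity will expand the determinant into exactly four terms, corresponding to the four subsets of $\{1,n\}$ whose rows have been replaced by the $e_i^T$ perturbation.

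More concretely, I would first expand
\begin{align*}
\det\bigl(A+u_{11}E_{11}+u_{nn}E_{nn}\bigr)
&=\det(A)+u_{11}\det(M_1)+u_{nn}\det(M_n)+u_{11}u_{nn}\det(M_{1n}),
\end{align*}
where $M_1$ is $A$ with its first row replaced by $e_1^T$, $M_n$ is $A$ with its last row replaced by $e_n^T$, and $M_{1n}$ is $A$ with both replacements made simultaneously.

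Next I would evaluate each of these three auxiliary determinants by Laplace (cofactor) expansion. Expanding $\det(M_1)$ along its first row produces $1\cdot\det(A(1|1))$, since the only nonzero entry of that row is a $1$ in position $(1,1)$; similarly $\det(M_n)=\det(A(n|n))$. For $M_{1n}$, expand first along row $1$, yielding the determinant of the $(n-1)\times(n-1)$ submatrix obtained by deleting row $1$ and column $1$; that submatrix still has a last row equal to $(0,\dots,0,1)$, so a second cofactor expansion along its last row gives $\det(A(1,n|1,n))$. Substituting these three values back gives exactly the claimed identity.

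I do not expect a real obstacle here: the content is just the bilinear expansion of a determinant under rank-one perturbations on two disjoint rows, together with two routine cofactor expansions. The only thing to be careful about is bookkeeping of signs in the Laplace expansions of $M_1$, $M_n$, and $M_{1n}$, but because the $1$'s in the perturbed rows are in positions $(1,1)$ and $(n,n)$ of their respective matrices, all the relevant cofactor signs $(-1)^{i+j}$ equal $+1$, so no sign ambiguity arises.
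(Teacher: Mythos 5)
Your proof is correct. The paper actually states this lemma without giving any proof of its own (it is used later, in the proof of Theorem~\ref{th7}, via two successive single--row expansions of the form $\det(B+uE_{nn})=\det(B)+u\det(B(n|n))$), so there is nothing to compare against except that implicit usage; your argument --- multilinearity of the determinant in the two perturbed rows, giving the four terms indexed by subsets of $\{1,n\}$, followed by cofactor expansions that pick up only $+1$ signs because the unit entries sit in diagonal positions --- is the standard route and supplies exactly the missing justification.
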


\begin{theorem}\label{th7}
 The adjacency eigenvalues of an $s$-loose path $P^m_{L(s,n)}$ are
 \begin{enumerate}
 \item
$\dfrac{-1}{m-1}$ with the multiplicity atleast $n(m-1)-2s(n-1)$,~$\dfrac{-2}{m-1}$ with the  multiplicity atleast $(n-1)(s-1)$ and 
$\dfrac{\alpha_i}{m-1}$ with the multiplicity one, for $i=1,2,\dots,2n-1$, where $\alpha_i$'s are the zeros of the polynomial 
 $$
\dfrac{(m-s-1-x)^2f_1(x)+2s^2(1+x)(m-s-1-x)f_2(x)+s^4(1+x)^2f_3(x)}{m-2s-1-x},
$$ 
where 
$$
f_j(x)=\prod\limits_{i=1}^{n-j}\{x^2-(m-3+2s\cos{\dfrac{\pi i}{n-j+1}})x-2(m-s-1-x+s\cos{\dfrac{\pi i}{n-j+1}})\}.
$$ for $j=1,2,3$, when $m\geq{2s+1}$ and
\item $\dfrac{-1}{m-1}$ with the multiplicity $2(s-1)$,~$\dfrac{-2}{m-1}$ with multiplicity $(n-1)(s-1)$ and $\dfrac{\beta_i}{2s-1}$ with the multiplicity one, where $\beta_i$ are the zeros of the polynomial $(x-s+1)^2t_1(x)+2s^2(x-s+1)t_2(x)+s^4t_3(x),$ where $t_j(x)=\prod\limits_{i=1}^{n-j}(2s-2-x+2s\cos{\dfrac{\pi i}{n-j+1}})$, for $j=1,2,3$ when $m=2s.$
\end{enumerate}
\end{theorem}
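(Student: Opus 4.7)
The strategy mirrors the proof of Theorem \ref{th6} for loose cycles: realize $P^m_{L(s;n)}$ as $\tfrac{1}{m-1}A_G$ for a graph $G$ built from a path-like backbone (analogous to the $C_n\square^n K_1$ backbone used for loose cycles, but with the cyclic wraparound broken and two pendant end-cells of size $m-s$ in place of the two closing overlap cells), apply Corollary \ref{cor2} to read off the non-Perron eigenvalues from the participants, and finally compute the characteristic polynomial of the quotient matrix via the Schur complement (Lemma \ref{lemma2}) together with the corner-expansion identity (Lemma \ref{Eq16}).

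For Case 1 ($m\geq 2s+1$), the backbone has $2n-1$ cells: $n-1$ overlap cells $O_1,\dots,O_{n-1}$ with participant $K_s$ of weight $2$ (reflecting codegree $2$ within an overlap), $n-2$ internal middle cells $M_2,\dots,M_{n-1}$ with participant $K_{m-2s}$ of weight $1$, and two end cells $E_1,E_2$ with participant $K_{m-s}$ of weight $1$. Each internal hyperedge $e_i$ contributes a triangle $\{O_{i-1},M_i,O_i\}$ to $G_b$, while $e_1$ and $e_n$ contribute the pendant edges $\{E_1,O_1\}$ and $\{O_{n-1},E_2\}$. Corollary \ref{cor2} yields the multiplicities $2(m-s-1)+(n-2)(m-2s-1)=n(m-1)-2s(n-1)$ for $\tfrac{-1}{m-1}$ and $(n-1)(s-1)$ for $\tfrac{-2}{m-1}$, leaving exactly $2n-1$ eigenvalues from the quotient matrix $B$. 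I would compute $\det(B-xI)$ in two stages. First, use Lemma \ref{lemma2} to eliminate the $n-2$ internal middle blocks (each a single diagonal entry of value $m-2s-1$), producing the factor $(m-2s-1-x)^{n-2}$ and reducing to an effective $(n+1)\times(n+1)$ matrix on $\{E_1,O_1,\dots,O_{n-1},E_2\}$. Second, this reduced matrix differs from a ``cyclic reference'' matrix (whose determinant yields $f_1(x)$) only at its two corners: the corner diagonals carry the coefficient $(m-s-1-x)$ coming from the end cells, and the corner couplings carry a coefficient of order $s^2(1+x)$ coming from the end hyperedges touching just one overlap. Applying Lemma \ref{Eq16} then splits the determinant into the three-term sum, weighted by $(m-s-1-x)^2$, $2s^2(1+x)(m-s-1-x)$ and $s^4(1+x)^2$, of determinants of the reference matrix with zero, one, or both corner rows/columns deleted, which are precisely the polynomials $f_1,f_2,f_3$ of the theorem.

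For Case 2 ($m=2s$) the internal middle cells disappear and the backbone reduces to the path $E_1{-}O_1{-}\cdots{-}O_{n-1}{-}E_2$ on $n+1$ cells with $K_s$-participants (weight $1$ at the ends, weight $2$ at the overlaps); Corollary \ref{cor2} then gives multiplicities $2(s-1)$ for $\tfrac{-1}{2s-1}$ and $(n-1)(s-1)$ for $\tfrac{-2}{2s-1}$. The quotient matrix is the tridiagonal $B=sA_{P_{n+1}}+(2s-2)I_{n+1}-(s-1)(E_{11}+E_{n+1,n+1})$, and Lemma \ref{Eq16} with $u_{11}=u_{n+1,n+1}=-(s-1)$ directly gives $\det(B-xI)=p_{n+1}(x)-2(s-1)p_n(x)+(s-1)^2 p_{n-1}(x)$, where $p_k(x)=\prod_{i=1}^{k}(2s-2-x+2s\cos\tfrac{\pi i}{k+1})$. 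Two applications of the standard path recurrence $p_k=(2s-2-x)p_{k-1}-s^2p_{k-2}$ then rewrite this as the claimed combination $(x-s+1)^2 t_1(x)+2s^2(x-s+1)t_2(x)+s^4 t_3(x)$ with $t_j=p_{n-j}$.

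The main obstacle will be the bookkeeping in Case 1. After the Schur elimination the reduced $(n+1)\times(n+1)$ matrix picks up off-diagonal contributions both from the original backbone edges $\{O_{i-1},O_i\}$ and from the just-eliminated middle blocks, and these must combine cleanly so that only two genuine corner perturbations of exactly the form demanded by Lemma \ref{Eq16} remain. Identifying the correct cyclic reference matrix whose determinant is $f_1$, and verifying that the corner corrections split into precisely the $(m-s-1-x)$ (end-cell diagonal) and $s^2(1+x)$ (end-cell coupling) pieces that produce the stated coefficients, is where the calculation is most delicate.
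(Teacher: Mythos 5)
Your proposal is correct in substance and follows essentially the same route as the paper: the same weighted backbone graph (for $m\geq 2s+1$ the graph $P_{n-1}\square K_1$ with two extra pendant cells carrying $K_{m-s}$, and for $m=2s$ the path $P_{n+1}$ of $K_s$-cells), Corollary~\ref{cor2} for the $\tfrac{-1}{m-1}$ and $\tfrac{-2}{m-1}$ multiplicities, and Lemma~\ref{lemma2} followed by the corner-expansion Lemma~\ref{Eq16} to split the quotient determinant into the three-term combination of $f_1,f_2,f_3$ (resp.\ $t_1,t_2,t_3$). The one point to tidy in Case~1 is that the corner expansion must be applied after Schur-eliminating the end cells as well, i.e.\ to the $(n-1)\times(n-1)$ \emph{tridiagonal} (path, not cyclic) reference matrix $t_1A_{n-1}+t_2I_{n-1}$ with corner perturbation $t_3=\tfrac{s^2(1+x)}{(m-2s-1-x)(m-s-1-x)}$ — applying Lemma~\ref{Eq16} to your intermediate $(n+1)\times(n+1)$ matrix would produce path products of sizes $n+1,n,n-1$ rather than the sizes $n-1,n-2,n-3$ appearing in $f_1,f_2,f_3$.
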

\begin{proof}
\hspace{-.05cm}
\begin{enumerate}
\item \textbf{Case $m\geq 2s+1$:}
 Let $G_0=P_{n-1}\square K_1$ and $\H=P^{(m)}_{L(s;n)}$. We label the veretices of $G_0$ as the vertices of $P_{n-1}$ are labelled as $\{1,2,\dots,n-1\}$ and $V(G_{0}\backslash V(P_{n-1}))=\{n,n+1,\dots,2n-3\}$. Now we join the vertex $2n-2$ with the vertex $1$ and the vertex $2n-1$ with the vertex $n-1$ by an edge respectively. 
 Let $G_b(V_b,E_b)$ be the new resulting graph. We also take
 $$
 G_i=
 \begin{cases}
 K_s&\text{with each edge weight 2, for $i=1,2,\dots,n-1$},\\
 K_{m-2s}&\text{with each edge weight 1, for $i=n,n+1,\dots,2n-3$},\\
 K_{m-s}&\text{with each edge weight 1, for $i=2n-2,2n-1$.}
 \end{cases}
 $$
 Now considering $G_b$ as the backbone and $G_i$'s as the participants we get a graph $G$(say). 
 Here $A_{\H}=\dfrac{1}{m-1}A_G$. Using Corollary~\ref{cor2} we have $-1$ and $-2$ are the eigenvalues of $G$ with the multiplicities atleast $n(m-1)-2s(n-1)$ and atleast $(n-1)(s-1)$ respectively. The remaining eigenvalues of $A_G$ are the eigenvalues of the quotient matrix $B$ which is given by 
 $$
 B=
 \begin{bmatrix}
 sA_{n-1}+2(s-1)I_{n-1}&(m-2s)Y&(m-s)Z\\
 sY^t&(m-2s-1)I_{n-2}&0\\
 sZ^t&0&(m-s-1)I_2
 \end{bmatrix},
 $$ where $A_l$ is the adjacency matrix of the path graph $P_l$,~
 $
 Y=
 \begin{bmatrix}
 1&0&0&\cdots&0&0\\
 1&1&0&\cdots&0&0\\
 0&1&1&\cdots&0&0\\
 \vdots&\vdots&\vdots&\ddots&\vdots&\vdots\\
 0&0&0&\cdots&1&1\\
 0&0&0&\cdots&0&1
 \end{bmatrix}_{n-1\times n-2}
 $ and
 $
 Z=
 \begin{bmatrix}
 1&0&\cdots &0\\
 0&0&\cdots &1
 \end{bmatrix}^t
 $.
 Since $ZZ^t=E_{11}+E_{n-1n-1}$ and $YY^t=2I_{n-1}-E_{11}-E_{n-1n-1}+A_{n-1}$, using Lemma~ \ref{lemma2} we get the characteristic polynomial of $B$ as
 \begin{align*}
 f_B(x)&=det(B-xI)\\
 &=(m-s-1-x)^2(m-2s-1-x)^{n-2}det(\{sA_{n-1}+(2s-2-x)I_{n-1}-\\
 &\ \ \ \ \ \ 
 \begin{bmatrix}
 (m-2s)Y&(m-s)Z
 \end{bmatrix}diag(\dfrac{1}{m-2s-1-x}I_{n-2},\dfrac{1}{m-s-1-x}I_2) 
 \begin{bmatrix}
 sY^t\\
 sZ^t
 \end{bmatrix}\})\\
 &=(m-s-1-x)^2(m-2s-1-x)^{n-2}det(\{
 sA_{n-1}+(2s-2-x)I_{n-1}-\\
 &\ \ \ \ \ \
 \dfrac{s(m-s)}{m-s-1-x}ZZ^t-\dfrac{(m-2s)s}{m-2s-1-x}YY^t)\}\\
&=(m-s-1-x)^2(m-2s-1-x)^{n-2}det(\{sA_{n-1}+(2s-2-x)I_{n-1}\\
 &\ \ \ \ \ \ 
 -\dfrac{s(m-s)}{m-s-1-x}(E_{11}+E_{n-1n-1})-\dfrac{(m-2s)s}{m-2s-1-x}(2I_{n-1}-E_{11}-E_{n-1n-1}+A_{n-1}))\} \notag \\
 &=(m-s-1-x)^2(m-2s-1-x)^{n-2}det(\{t_1A_{n-1}+t_2I_{n-1}+t_3E_{11}+t_3E_{n-1n-1}\})\\
 &=(m-s-1-x)^2(m-2s-1-x)^{n-2}h(x),
 \end{align*}
 where $t_1=\dfrac{-s(1+x)}{m-2s-1-x}$,~$t_2=2s-2-x-\dfrac{2(m-2s)s}{m-2s-1-x}$,~$t_3=\dfrac{s^2(1+x)}{(m-2s-1-x)(m-s-1-x)}$ and $h(x)=det(\{t_1A_{n-1}+t_2I_{n-1}+t_3E_{11}+t_3E_{n-1n-1}\})$. 
 Now using the Lemma \ref{Eq16} we have
 \begin{align} 
 h(x)&=det(\{t_1A_{n-1}+t_2I_{n-1}+t_3E_{11}+t_3E_{n-1n-1}\}) \notag \\
 &=det(t_1A_{n-1}+t_2I_{n-1}+t_3E_{11})+t_3det(\{t_1A_{n-1}+t_2I_{n-1}+t_3E_{11}\}(n-1|n-1)) \notag \\
 &=det(t_1A_{n-1}+t_2I_{n-1})+t_3det(\{t_1A_{n-1}+t_2I_{n-1}\}(1|1))+ \notag \\
 & \notag \ \ \ \ \
 t_3det(\{t_1A_{n-1}+t_2I_{n-1}\}(n-1|n-1))+t_3^2det(\{t_1A_{n-1}+t_2I_{n-1}\}(1,n-1|1,n-1)) \notag \\
 &=det(t_1A_{n-1}+t_2I_{n-1})+2t_3det(\{t_1A_{n-2}+t_2I_{n-2}\}+t_3^2det(\{t_1A_{n-3}+t_2I_{n-3}\}.
 \end{align} 
 Again we know that the adjacency eigenvalues of a path $P_l$ of length $l$ are $\{2\cos \dfrac{\pi i}{l+1}: i=1,2,\dots,l\}$.
 Thus we get
 \begin{align}
 h(x)&=\prod\limits_{i=1}^{n-1}(t_2+2t_1\cos{\dfrac{\pi i}{n}})+2t_3\prod\limits_{i=1}^{n-2}(t_2+2t_1\cos{\dfrac{\pi i}{n-1}})+t_3^2\prod\limits_{i=1}^{n-3}(t_2+2t_1\cos{\dfrac{\pi i}{n-2}}) \notag \\
 &=h_1(x)+2t_3h_2(x)+t_3^2h_3(x),
 \end{align} 
 where $h_j(x)=\dfrac{f_j(x)}{(m-2s-1-x)^{n-j}}$ for $j=1,2,3$. \\
 
 Now, $h(x)=\dfrac{(m-s-1-x)^2f_1(x)+2s^2(1+x)(m-s-1-x)f_2(x)+s^4(1+x)^2f_3(x)}{(m-2s-1-x)^{n-1}(m-s-1-x)^2}$. \\
 Therefore $$f_B(x)=\dfrac{(m-s-1-x)^2f_1(x)+2s^2(1+x)(m-s-1-x)f_2(x)+s^4(1+x)^2f_3(x)}{m-2s-1-x}.$$
 \item \textbf{Case $m=2s$:} Let $\H=P^{(2s)}_{L(s;n)}$ and $G_b=P_{n+1}$. We take the vertices of $G_b$ as $V(G_b)=\{1,2,\dots,n+1\}$ with the end vertices $1$ and $n+1$. Here we take 
 $$
 G_i=
 \begin{cases}
 K_s&\text{with each edge weight 2, for $i=2,3,\dots n$},\\
 K_s&\text{with each edge weight 1, for $i=1,n+1$}.
 \end{cases}
 $$ 
 Now, considering $G_b$ as the backbone and $G_i$'s as the participants we get a graph $G$(say). Then $A_{\H}=\dfrac{1}{2s-1}A_G$. The quotient matrix $B$ is given by 
 $$
 B=
 \begin{bmatrix}
 sA_{n-1}+2(s-1)I_{n-1}&sZ\\
 sZ^t&(s-1)I_2
 \end{bmatrix},
 $$ where $Z$ is the matrix defined in first part of this theorem. Using Lemma~\ref{lemma2} we have the characteristic polynomial of $B$ as
 \begin{align*}
 f_B(x)&=det(B-xI)\\
 &=(s-1-x)^2det(\{sA_{n-1}+(2s-2-x)I_{n-1}-\dfrac{s^2}{s-1-x}(E_{11}+E_{n-1n-1})\}).
 \end{align*}
 Now, the result folows by using the similar technique of computation shown in part one.
 \end{enumerate}
 \end{proof}
 We consider a special case of the Theorem~\ref{th7} by taking $s=1$.  
Let $\H=P^{(m)}_{L_{(1,n)}}$ be the $m$-uniform loose path with $n$-edges. Let $G=P_{n+1}\square^{n}K_{m-2}$. 
Then $A_{\H}=\dfrac{1}{m-1}A_{G}$. Using the Corollary \ref{cor5} we have the characteristic polynomial of $A_G$ as
\begin{align}
f_G(x)&=det(A_G-xI) \notag \\
&=\dfrac{(-1-x)^{n(m-3)}}{m-3-x}det(\{(m-3-x)A_{n+1}-(m-2)D_d+x(x-m+3)I_{n+1}\})\label{Eq15}\\
&=\dfrac{(-1-x)^{n(m-3)}}{m-3-x}g(x), &\text{(say)} \notag
\end{align}
where $D_d=diag(1,2,2,\dots,2,1)$ and $A_{n+1}$ is the adjacency matrix of the path graph $P_{n+1}$ over $n+1$ vertices. Again, $D_d=2I_{n+1}-E_{11}-E_{n+1n+1}$. So we have 
\begin{align*}
g(x)&=det(\{(m-3-x)A_{P_{n+1}}-(m-2)D_d+x(x-m+3)I_{n+1}\})\\
&=det(\{(m-3-x)A_{P_{n+1}}-(m-2)(2I_{n+1}-E_{11}-E_{n+1n+1})+x(x-m+3)I_{n+1}\})\\
&=det(\{uA_{P_{n+1}}+vI_{n+1}+(m-2)E_{11}+(m-2)E_{n+1n+1}\}),\\
&\ \ \ \
\text{where $u:=m-3-x, v:=x(x-m+3)-2(m-2)$}\\
&=det(\{uA_{P_{n+1}}+vI_{n+1}+(m-2)E_{11}\})+(m-2)det(\{uA_{P_{n+1}}+vI_{n+1}+(m-2)E_{11}\}(n+1|n+1))\\
&=det(\{uA_{P_{n+1}}+vI_{n+1}\})+(m-2)det(\{uA_{P_{n+1}}+vI_{n+1}\}(1|1))\\  &\ \ \ \ + (m-2)det(\{uA_{P_n}+vI_n+(m-2)E_{11}\})\\
&=det(\{uA_{P_{n+1}}+vI_{n+1}\})+2(m-2)det(\{uA_{P_n}+vI_n\})+(m-2)^2det(\{uA_{P_{n-1}}+vI_{n-1}\})\\
&=\prod\limits_{i=1}^{n+1}(2u\cos\dfrac{\pi i}{n+2}+v)+2(m-2)\prod\limits_{i=1}^n(2u\cos\dfrac{\pi i}{n+1}+v)+(m-2)^2\prod\limits_{i=1}^{n-1}(2u\cos\dfrac{\pi i}{n}+v)\\
&=g_1(x)+2(m-2)g_2(x)+(m-2)^2g_3(x).
\end{align*}
\begin{align*}
g_j(x)&=\prod\limits_{i=1}^{n-j+2}(2u\cos\dfrac{\pi i}{n-j+3}+v)\\
&=\prod\limits_{i=1}^{n-j+2}\{(x-m+3)(x-\cos{\dfrac{\pi i}{n-j+3}})-2(m-2)\},
\end{align*} where for $j=1,2,3$.

Thus we find the eigenvalues of the graph $G$ as the zeros of the polynomial 

\begin{align}
f_G(x)=\dfrac{(-1-x)^{n(m-3)}}{m-3-x}\{g_1(x)+2(m-2)g_2(x)+(m-2)^2g_3(x)\}.
\end{align}

From the Theorem~\ref{th7} we have $(x+1)^2$ is a factor of $g(x)$.

\section*{Acknowledgements}
AS is sincerely thankful to Shibananda Biswas for fruitful discussions.  AB is thankful to the Science and Engineering Research Board (SERB), Government of India for the financial support through Mathematical Research Impact Centric Support (MATRICS) grant (File Number: MTR/2017/000988).

\end{document}